\documentclass[12pt]{amsart}
\usepackage{fullpage}

\usepackage{setspace}
\RequirePackage{amssymb}
\usepackage{amsbsy}
\usepackage{enumerate}
\usepackage{mathrsfs}
\usepackage{undertilde}
\usepackage{amsmath}
\usepackage{url}

\theoremstyle{plain}
\newtheorem{theorem}{Theorem}[section]
\newtheorem*{theorem*}{Theorem}
\newtheorem*{mainthm*}{Main Theorem}
\newtheorem{lemma}[theorem]{Lemma} \newtheorem*{lem*}{Lemma}
 \newtheorem*{claim*}{Claim}
 \newtheorem*{cor*}{Corollary}
\newtheorem{proposition}[theorem]{Proposition} \newtheorem*{prop*}{Proposition}

\theoremstyle{definition}
\newtheorem{definition}[theorem]{Definition} \newtheorem*{defn*}{Definition}

\theoremstyle{remark}
\newtheorem{remark}[theorem]{Remark} \newtheorem*{rem*}{Remark}
 \newtheorem*{example*}{Example}
\newtheorem{question}[theorem]{Question} \newtheorem*{question*}{Question}


\newcommand{\bfSigma}{\utilde{\bf\Sigma}}
\newcommand{\bfPi}{\utilde{\bf\Pi}}

\newcommand{\jhat}{\hat \jmath}
\DeclareMathOperator{\powerset}{\mathcal{P}}
\DeclareMathOperator{\Con}{Con}
\DeclareMathOperator{\Col}{Col}
\DeclareMathOperator{\crit}{crit}

\DeclareMathOperator{\ran}{ran}
\DeclareMathOperator{\rank}{rank}
\DeclareMathOperator{\cf}{cf}
\DeclareMathOperator{\p}{p}

\begin{document}

\title{Generic Vop\v{e}nka cardinals and models of ZF with few $\aleph_1$-Suslin sets}

\author{Trevor M.\ Wilson}

\address{Department of Mathematics\\Miami University\\Oxford, Ohio 45056\\USA}
\email{twilson@miamioh.edu} 
\urladdr{https://www.users.miamioh.edu/wilso240}

\begin{abstract}
 We define a \emph{generic Vop\v{e}nka cardinal} to be an inaccessible cardinal $\kappa$ such that for every first-order language $\mathcal{L}$ of cardinality less than $\kappa$ and every set $\mathscr{B}$ of $\mathcal{L}$-structures, if $|\mathscr{B}| = \kappa$ and every structure in $\mathscr{B}$ has cardinality less than $\kappa$, then an elementary embedding between two structures in $\mathscr{B}$ exists in some generic extension of $V$. We investigate connections between generic Vop\v{e}nka cardinals in models of ZFC and the number and complexity of $\aleph_1$-Suslin sets of reals in models of ZF. In particular, we show that ZFC + (there is a generic Vop\v{e}nka cardinal) is equiconsistent with ZF + $(2^{\aleph_1} \not\leq |S_{\aleph_1}|)$ where $S_{\aleph_1}$ is the pointclass of all $\aleph_1$-Suslin sets of reals, and also with ZF + $(S_{\aleph_1} = {\bfSigma}^1_2)$ + $(\Theta = \aleph_2)$ where $\Theta$ is the least ordinal that is not a surjective image of the~reals.
\end{abstract}

\maketitle

\section{Introduction}

A set of reals, meaning a subset of the Baire space $\omega^\omega$, is called ${\bfSigma}^1_1$ if it is the projection of a closed subset of $\omega^\omega \times \omega^\omega$. There are two natural ways to generalize this definition. First, we may begin with a closed subset of a higher-dimensional product $\omega^\omega \times \cdots \times \omega^\omega$ and alternate between projection and complementation to obtain the projective pointclasses ${\bfSigma}^1_1$, ${\bfPi}^1_1$, ${\bfSigma}^1_2$, ${\bfPi}^1_2$, and so on. Alternatively, we may project closed subsets of $\omega^\omega \times \kappa^\omega$ for $\kappa = \aleph_1$, $\aleph_2$, $\aleph_3$, and so on. The projection of a closed subset of $\omega^\omega \times \kappa^\omega$ is called \emph{$\kappa$-Suslin} and the pointclass of all $\kappa$-Suslin sets of reals is denoted by $S_\kappa$.  A set of reals that is $\kappa$-Suslin for some well-ordered cardinal $\kappa$ is called \emph{Suslin}. The relationship between the projective and Suslin hierarchies is already interesting at the level of ${\bfSigma}^1_2$ sets and $\aleph_1$-Suslin sets.

A couple of basic facts provable in ZF about the pointclass $S_{\aleph_1}$ of $\aleph_1$-Suslin sets are (1) $|S_{\aleph_1}| \le^* 2^{\aleph_1}$, meaning that there is a surjection from $\powerset(\aleph_1)$ to $S_{\aleph_1}$, and (2) ${\bfSigma}^1_2 \subset S_{\aleph_1}$. Fact (1) follows from the fact that the topological space $\omega^\omega \times \aleph_1^\omega$ has a basis of cardinality $\aleph_1$, so its closed subsets can be coded by subsets of $\aleph_1$. Fact (2) is due to Shoenfield (see Kanamori \cite[Theorem 13.14]{KanHigherInfinite}). Two natural questions to consider next are whether $2^{\aleph_1} \le |S_{\aleph_1}|$, in other words whether there is an injection from $\powerset(\aleph_1)$ to $S_{\aleph_1}$, and whether $S_{\aleph_1} = {\bfSigma}^1_2$. We will briefly discuss the answers to these questions in ZFC and then move to the ZF context where the main results of this article will be obtained.

Assuming ZFC we have $2^{\aleph_1} \le |S_{\aleph_1}|$ and therefore $|S_{\aleph_1}| = 2^{\aleph_1}$. This follows from the fact that every well-ordered set of reals $A$ is $|A|$-Suslin, so fixing a set of reals of cardinality $\aleph_1$, its subsets form a family of $2^{\aleph_1}$ distinct $\aleph_1$-Suslin sets. Assuming ZFC + CH we have $S_{\aleph_1} \ne {\bfSigma}^1_2$ because the aforementioned fact now shows that every set of reals is $\aleph_1$-Suslin, whereas not every set of reals can be ${\bfSigma}^1_2$ because $|{\bfSigma}^1_2| \le^* 2^{\aleph_0}$ by definition. On the other hand, assuming ZFC + $\neg$CH + MA + $(\aleph_1^L = \aleph_1)$ we have $S_{\aleph_1} = {\bfSigma}^1_2$ by Martin and Solovay \cite[Section 3]{MarSolInternalCohenExtensions}.

Note that the above proof of $2^{\aleph_1} \le |S_{\aleph_1}|$ in ZFC only requires ZF + $(\aleph_1 \le 2^{\aleph_0})$. Therefore if we assume ZF + $(2^{\aleph_1} \not\le |S_{\aleph_1}|)$ then we have $\aleph_1 \not\le 2^{\aleph_0}$, which implies that $\aleph_1^V$ is a strong limit cardinal in every inner model of ZFC. If we also assume the axiom of countable choice then $\aleph_1^V$ is regular, so it is an inaccessible cardinal in every inner model of ZFC. We will show that in fact the consistency strength of the theory ZF + $(2^{\aleph_1} \not\leq |S_{\aleph_1}|)$  is strictly higher than the existence of an inaccessible cardinal, even without assuming countable choice.

The consistency strength of this theory can be described in terms of a variant of the generic Vop\v{e}nka principle.
First defined by Bagaria, Gitman, and Schindler \cite{BagGitSchGenericVopenka}, the generic Vop\v{e}nka principle says that for every first-order language $\mathcal{L}$ and every proper class $\mathscr{B}$ of $\mathcal{L}$-structures there are distinct structures $\mathcal{M}, \mathcal{M}' \in \mathscr{B}$ such that an elementary embedding of $\mathcal{M}$ into $\mathcal{M}'$ exists in some generic extension of $V$. It can be formalized in ZFC for proper classes definable from parameters, in which case it may be called the generic Vop\v{e}nka scheme, or in GBC (G\"{o}del--Bernays set theory with the axiom of global choice) for arbitrary proper classes as in Gitman and Hamkins \cite{GitHamOrdNotDelta2Mahlo}.

The generic Vop\v{e}nka principle and the variant of it that we will define below are instances of ``virtual'' large cardinal principles.  For more information on virtual large cardinals, see Gitman and Schindler \cite{GitSchVirtualLargeCardinals}.  For an application of virtual large cardinals to descriptive set theory involving universally Baire sets instead of $\aleph_1$-Suslin sets, see Schindler and Wilson~\cite{SchWilConPSPuB}.

Given two structures $\mathcal{M}$ and $\mathcal{M}'$ we will abbreviate the statement ``an elementary embedding of $\mathcal{M}$ into $\mathcal{M}'$ exists in some generic extension of $V$'' by the phrase ``there is a generic elementary embedding of $\mathcal{M}$ into $\mathcal{M}'$,'' which is a harmless abuse of notation. Note that by the absoluteness of elementary embeddability of countable structures (see Bagaria, Gitman, and Schindler \cite[Lemma 2.6]{BagGitSchGenericVopenka}) this statement is equivalent to the statement ``every generic extension of $V$ by $\Col(\omega,\mathcal{M})$ contains an elementary embedding of $\mathcal{M}$ into $\mathcal{M}'$.''

For a cardinal $\kappa$, we may define a local version of the generic Vop\v{e}nka principle by replacing sets and proper classes with sets of cardinality less than $\kappa$ and sets of cardinality $\kappa$ respectively:

\begin{definition}[ZFC]
 A \emph{generic Vop\v{e}nka-like cardinal} is an uncountable cardinal $\kappa$ such that for every first-order language $\mathcal{L}$ of cardinality less than $\kappa$ and every set $\mathscr{B}$ of $\mathcal{L}$-structures, if $\mathscr{B}$ has cardinality $\kappa$ and every structure in $\mathscr{B}$ has cardinality less than $\kappa$, then there are distinct structures $\mathcal{M}, \mathcal{M}' \in \mathscr{B}$ and a generic elementary embedding of $\mathcal{M}$ into $\mathcal{M}'$.
\end{definition}

In the context of this definition every $\mathcal{L}$-structure of cardinality less than $\kappa$ is isomorphic to a structure in $H_\kappa$, so we may as well assume $\mathscr{B} \subset H_\kappa$. Although the definition does not require inaccessibility of $\kappa$, it is easily seen to imply the strong limit property:

\begin{lemma}[ZFC] \label{lem:strong-limit}
 Every generic Vop\v{e}nka-like cardinal is a strong limit cardinal.
\end{lemma}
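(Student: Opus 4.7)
The plan is to prove the contrapositive: if $\kappa$ is uncountable but fails to be a strong limit cardinal, then $\kappa$ fails to be generic Vop\v{e}nka-like. So suppose there is some $\lambda < \kappa$ with $2^\lambda \geq \kappa$, and fix an injection $\xi \mapsto A_\xi$ from $\kappa$ into $\powerset(\lambda)$. The idea is to package each $A_\xi$ as a rigidly identifiable structure in $H_\kappa$ in such a way that any elementary embedding between two of them, even one coming from a generic extension, has to be the identity on $\lambda$ and therefore forces the two sets to coincide.

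Concretely, I would let $\mathcal{L}$ be the first-order language with a unary predicate symbol $P$ together with a constant symbol $c_\alpha$ for each $\alpha < \lambda$, so $|\mathcal{L}| \le \lambda + \aleph_0 < \kappa$. For each $\xi < \kappa$ let $\mathcal{M}_\xi$ be the $\mathcal{L}$-structure with universe $\lambda$ (or $\omega$ if $\lambda$ is finite, which is impossible since $\kappa$ is uncountable and $2^\lambda \ge \kappa$), with $c_\alpha^{\mathcal{M}_\xi} = \alpha$ for all $\alpha < \lambda$, and with $P^{\mathcal{M}_\xi} = A_\xi$. Then $\mathscr{B} = \{\mathcal{M}_\xi : \xi < \kappa\}$ is a set of $\mathcal{L}$-structures each of cardinality at most $\lambda < \kappa$, and distinctness of the $A_\xi$ makes $|\mathscr{B}| = \kappa$. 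So $\mathscr{B}$ meets the hypothesis of the generic Vop\v{e}nka-like property.

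Now suppose toward contradiction that there exist distinct $\xi,\zeta < \kappa$ and, in some generic extension $V[G]$, an elementary embedding $j \colon \mathcal{M}_\xi \to \mathcal{M}_\zeta$. For every constant symbol $c$ of $\mathcal{L}$, elementarity gives $j(c^{\mathcal{M}_\xi}) = c^{\mathcal{M}_\zeta}$, and for $c = c_\alpha$ this reads $j(\alpha) = \alpha$. Hence $j$ is the identity on $\lambda$, and for each $\alpha < \lambda$,
\[
  \alpha \in A_\xi \iff \mathcal{M}_\xi \models P(c_\alpha) \iff \mathcal{M}_\zeta \models P(c_\alpha) \iff \alpha \in A_\zeta,
\]
so $A_\xi = A_\zeta$, contradicting the injectivity of $\xi \mapsto A_\xi$. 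This contradiction, obtained in $V[G]$ but about an equality of sets in $V$, is equally a contradiction in $V$. Thus $\kappa$ is not generic Vop\v{e}nka-like, as desired.

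The only subtle point is confirming that ``$j$ fixes the interpretation of each constant symbol'' survives the move to a generic extension, but this is immediate: the structures $\mathcal{M}_\xi, \mathcal{M}_\zeta$ themselves are sets in $V$, so $c_\alpha^{\mathcal{M}_\xi}$ and $c_\alpha^{\mathcal{M}_\zeta}$ are absolute between $V$ and $V[G]$, and the sentence $j(c_\alpha^{\mathcal{M}_\xi}) = c_\alpha^{\mathcal{M}_\zeta}$ follows from the atomic formula $c_\alpha = c_\alpha$ satisfied in $\mathcal{M}_\xi$. So there is no real obstacle; the proof amounts to the standard observation that adding enough constants to the language makes every elementary embedding fix a cofinal subset of each structure.
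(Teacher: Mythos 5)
Your proof is correct and is essentially the paper's argument: both code $\kappa$ many subsets of a cardinal $\lambda<\kappa$ (with $2^\lambda\ge\kappa$) as $\mathcal{L}$-structures whose first-order theories already determine the subset, so that no two distinct structures are even elementarily equivalent, let alone generically embeddable into one another. The paper does this slightly more economically with $\lambda$ unary predicates on the singleton universe $\{0\}$, which also makes your detour through ``$j$ fixes each constant'' unnecessary --- elementary equivalence alone, via the sentences $P(c_\alpha)$, already yields $A_\xi=A_\zeta$.
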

\begin{proof}
 Let $\kappa$ be an uncountable cardinal that is not strong limit.  Then $\kappa \le 2^\alpha$ for some cardinal $\alpha < \kappa$, and letting $\mathcal{L}$ be a first-order language with $\alpha$ unary predicate symbols we may obtain $\kappa$ distinct $\mathcal{L}$-structures with universe $\{0\}$. There cannot be a generic elementary embedding between two such structures, so $\kappa$ is not a generic Vop\v{e}nka-like cardinal.
\end{proof}

Singular generic Vop\v{e}nka-like cardinals can exist, and we will show that they can be completely characterized in terms of $\omega$-Erd\H{o}s cardinals as defined by Baumgartner \cite{BauIneffabilityII}.  (The role of $\omega$-Erd\H{o}s cardinals here is similar to their role in Wilson \cite{WilWeaklyRemarkable}.) The following characterization of singular generic Vop\v{e}nka-like cardinals will be proved in Section \ref{section:generic-Vopenka-like-and-Erdos}.

\begin{proposition}[ZFC] \label{prop:singular-equivalence}
 For every singular cardinal $\kappa$, the following statements are equivalent:
 \begin{enumerate}
  \item $\kappa$ is a generic Vop\v{e}nka-like cardinal.
  \item $\kappa$ is a limit of $\omega$-Erd\H{o}s cardinals.
 \end{enumerate}
\end{proposition}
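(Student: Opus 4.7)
I will prove the two implications separately by linking the generic Vop\v{e}nka-like property at singular $\kappa$ to the existence of $\omega$-indiscernibles for structures on ordinals below $\kappa$, which is Baumgartner's defining feature of $\omega$-Erd\H{o}sness. The absoluteness of elementary embeddability between countable structures (Lemma 2.6 of \cite{BagGitSchGenericVopenka}) and of the existence of indiscernibles between $V$ and its generic extensions provides the bridge.

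For (2) $\Rightarrow$ (1), fix $\mathscr{B}$ and $\mathcal{L}$ as in the definition. Writing $\mathscr{B} = \bigcup_{\lambda < \kappa}(\mathscr{B} \cap H_\lambda)$ and using the singularity of $\kappa$ together with the hypothesis that $\omega$-Erd\H{o}s cardinals are unbounded in $\kappa$, I first choose an $\omega$-Erd\H{o}s cardinal $\lambda$ with $|\mathcal{L}| < \lambda < \kappa$ and $|\mathscr{B} \cap H_\lambda| \geq \lambda$. Enumerating $\langle \mathcal{M}_\alpha : \alpha < \lambda \rangle$ inside $\mathscr{B} \cap H_\lambda$ and expanding $(V_\lambda, \in)$ by this sequence together with canonical enumerations of each $\mathcal{M}_\alpha$, and using that $\lambda$ is inaccessible, I obtain a structure $\mathcal{A}$ codable on $\lambda$ in a language of size less than $\lambda$. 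The $\omega$-Erd\H{o}s property of $\lambda$ yields indiscernibles $\langle \alpha_n : n < \omega \rangle \subseteq \lambda$ for $\mathcal{A}$. In $V^{\Col(\omega, \lambda)}$, the structures $\mathcal{A}$, $\mathcal{M}_{\alpha_0}$, $\mathcal{M}_{\alpha_1}$ are all countable, and a back-and-forth construction extending the shift $\alpha_n \mapsto \alpha_{n+1}$ (using the indiscernibility to verify that type requirements can be met at each stage) produces the required elementary embedding of $\mathcal{M}_{\alpha_0}$ into $\mathcal{M}_{\alpha_1}$ as $\mathcal{L}$-structures.

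For the contrapositive of (1) $\Rightarrow$ (2), I assume no $\omega$-Erd\H{o}s cardinal lies in $[\beta, \kappa)$; since the singular $\kappa$ is itself not $\omega$-Erd\H{o}s, I fix $f : [\kappa]^{<\omega} \to 2$ without $\omega$-homogeneous subset. For each ordinal $\lambda \in [\beta, \kappa)$ set $\mathcal{A}_\lambda = (H_\lambda, \in, f \restriction [\lambda]^{<\omega})$ with Skolem functions added, and let $\mathscr{B} = \{\mathcal{A}_\lambda : \lambda \in [\beta, \kappa) \text{ and } \mathcal{A}_\lambda \prec (H_\kappa, \in, f)\}$, padding with mutually rigid witnesses if necessary to ensure cardinality $\kappa$. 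If some pair in the reflecting part of $\mathscr{B}$ were to admit a generic elementary embedding $j : \mathcal{A}_\lambda \to \mathcal{A}_{\lambda'}$ with $\lambda < \lambda'$, then $\mathcal{A}_\lambda \prec \mathcal{A}_{\lambda'}$, and the iterates $\gamma_n = j^{(n)}(\crit(j))$ remain below $\lambda$ (using that $j$ maps the ``ordinal height'' of $\mathcal{A}_\lambda$ into itself via the elementary substructure relation with the ambient $\mathcal{A}_\kappa$). These iterates form an $\omega$-sequence of indiscernibles in $\mathcal{A}_{\lambda'}$, hence a homogeneous set for $f \restriction [\lambda']^{<\omega}$; by absoluteness this homogeneous set exists in $V$, contradicting the choice of $f$.

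The main obstacle in both directions is the controlled interaction between the generic elementary embedding and the internal $\mathcal{L}$-structures: in (2) $\Rightarrow$ (1), extending the shift to a total elementary embedding of $\mathcal{M}_{\alpha_0}$ via a back-and-forth argument in the countable generic extension, which requires care when the structures $\mathcal{M}_\alpha$ have uncountable cardinality in $V$; and in (1) $\Rightarrow$ (2), arranging the family $\mathscr{B}$ to have cardinality $\kappa$ while ensuring that the critical sequence of any generic embedding stays below $\lambda$. These issues parallel the technical difficulties addressed for weakly remarkable cardinals in \cite{WilWeaklyRemarkable}, and I expect the same techniques to suffice.
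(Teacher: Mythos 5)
Your overall strategy (trading generic elementary embeddings for $\omega$-sequences of indiscernibles via Silver-style absoluteness) is the same as the paper's, but both directions as written have set-up errors that prevent the arguments from launching. For (2)~$\Rightarrow$~(1): your first step, choosing an $\omega$-Erd\H{o}s cardinal $\lambda$ with $|\mathcal{L}|<\lambda<\kappa$ and $|\mathscr{B}\cap H_\lambda|\ge\lambda$, is not possible in general. If $\cf(\kappa)=\omega$ and $(\lambda_n)_{n<\omega}$ enumerates the $\omega$-Erd\H{o}s cardinals below $\kappa$ cofinally, $\mathscr{B}$ could consist, for each $n$, of exactly $\lambda_n$ structures with universe $\lambda_n$; then $|\mathscr{B}|=\kappa$ while $|\mathscr{B}\cap H_\mu|<\mu$ for every $\omega$-Erd\H{o}s $\mu<\kappa$. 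The paper does not localize to $H_\lambda$: it applies the $\omega$-Erd\H{o}s property of some $\eta\in(|\mathcal{L}|,\kappa)$ to the \emph{full-height} structure $(V_\kappa;\mathord{\in},\vec{\mathcal{M}})$ (Lemma \ref{lem:gen-emb-from-omega-erdos}, i.e.\ indiscernibles below $\eta$ for a structure of rank $\kappa$), obtaining a generic $j:(V_\kappa;\mathord{\in},\vec{\mathcal{M}})\to(V_\kappa;\mathord{\in},\vec{\mathcal{M}})$ with $|\mathcal{L}|<\crit(j)<\eta$, and then restricts $j$ to $\vec{\mathcal{M}}(\crit(j))\to\vec{\mathcal{M}}(j(\crit(j)))$. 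The back-and-forth extension of the shift map that you defer to is exactly the content of that cited lemma, so deferring there is fine; the selection of $\lambda$ is the genuine gap.

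For (1)~$\Rightarrow$~(2) there are two independent problems. First, the relation $\eta\to(\omega)^{<\omega}_2$ is upward monotone in $\eta$, so if any $\omega$-Erd\H{o}s cardinal lies below your $\beta$ then \emph{every} $f:[\kappa]^{<\omega}\to 2$ has a homogeneous set of order type $\omega$, and your witnessing coloring does not exist; this is why the paper colors into $\alpha$ colors (where $(\alpha,\kappa)$ contains no $\omega$-Erd\H{o}s cardinal), works with colorings of each $\eta<\kappa$ separately, and adds constants for all ordinals $\le\alpha$ to force $\crit(j)>\alpha$ so that the range of the coloring is fixed pointwise. Second, the family $\{\mathcal{A}_\lambda : \mathcal{A}_\lambda\prec(H_\kappa,\mathord{\in},f)\}$ can have far fewer than $\kappa$ members --- for singular $\kappa$ it can even be empty (e.g.\ no $H_\lambda$ with $\lambda<\aleph_\omega$ is fully elementary in $H_{\aleph_\omega}$) --- and ``padding with mutually rigid witnesses'' to reach cardinality $\kappa$ begs the question: producing $\kappa$ pairwise generically rigid structures is essentially the failure of the generic Vop\v{e}nka-like property that you are trying to establish. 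Relatedly, the iterates $j^{(n)}(\crit(j))$ are not defined, since $j:\mathcal{A}_\lambda\to\mathcal{A}_{\lambda'}$ is not an endomap and there is no reason $j(\crit(j))<\lambda$. The paper's indexing by pairs $(i,\beta)$ with nullary predicates $R_i$ and a constant for $\beta$ solves all three issues at once: it yields $\sum_i\eta_i=\kappa$ structures, and the predicates force any generic embedding between two of them to be a non-identity endomap of a single $(L_{\eta_i};\mathord{\in},f_i)$ with critical point above $\alpha$, so the critical sequence exists and produces the homogeneous set.
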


Because $\omega$-Erd\H{o}s cardinals are relatively well-understood, we will focus instead on generic Vop\v{e}nka-like cardinals that are regular and therefore inaccessible:

\begin{definition}[ZFC]
 A \emph{generic Vop\v{e}nka cardinal} is a regular generic Vop\v{e}nka-like \mbox{cardinal}.
\end{definition}

Recall that a cardinal $\kappa$ is inaccessible if and only if the two-sorted structure $(V_\kappa , V_{\kappa+1};\mathord{\in})$ satisfies GBC.  Moreover, note that a cardinal $\kappa$ is a generic Vop\v{e}nka cardinal if and only if $(V_\kappa , V_{\kappa+1};\mathord{\in})$ satisfies GBC + the generic Vop\v{e}nka principle. Therefore the existence of a generic Vop\v{e}nka cardinal has higher consistency strength than the generic Vop\v{e}nka principle.

The following result gives an upper bound for the consistency strength of generic Vop\v{e}nka cardinals. It will also be proved in Section \ref{section:generic-Vopenka-like-and-Erdos}, using an argument of Gitman and Schindler \cite{GitSchVirtualLargeCardinals} to obtain generic elementary embeddings from the $\omega$-Erd\H{o}s property.

\begin{proposition}[ZFC] \label{prop:omega-erdos-is-gen-vop-lim-of-gen-vop}
 Every $\omega$-Erd\H{o}s cardinal is a generic Vop\v{e}nka cardinal and a limit of generic Vop\v{e}nka cardinals.
\end{proposition}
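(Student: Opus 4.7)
The plan is to derive both conclusions from the Gitman--Schindler technique of extracting generic elementary self-embeddings from $\omega$-Erd\H{o}s indiscernibles. Let $\kappa$ be $\omega$-Erd\H{o}s. Since $\omega$-Erd\H{o}s cardinals are inaccessible, $\kappa$ is regular, so asking whether it is a generic Vop\v{e}nka cardinal is meaningful. I will first show $\kappa$ itself is a generic Vop\v{e}nka cardinal, then use indescribability reflection to produce cofinally many such cardinals below $\kappa$.

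For the first part, given $\mathcal{L}$ and $\mathscr{B}$ as in the definition, I fix an injective enumeration $\vec{\mathcal{M}} = \langle \mathcal{M}_\alpha : \alpha < \kappa \rangle$ of $\mathscr{B}$ and form the expanded structure $\mathcal{A} = (V_\kappa, \in, \mathcal{L}, \vec{\mathcal{M}})$, treating $\vec{\mathcal{M}}$ as a binary predicate. Adding a well-ordering of $V_\kappa$ and Skolem functions, the $\omega$-Erd\H{o}s property of $\kappa$ supplies a set $I = \{i_n : n < \omega\} \subseteq \kappa$ of indiscernibles for $\mathcal{A}$ with $i_0 > |\mathcal{L}|$. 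The standard shift-of-indiscernibles argument then produces a nontrivial elementary self-embedding of the Skolem hull $\mathrm{Hull}(I) \prec \mathcal{A}$ with critical point $\mu := i_0$. Passing to the transitive collapse and lifting back in a $\Col(\omega, V_\kappa)$-generic extension of $V$ using the absoluteness of elementary embeddability for countable structures cited in the introduction, one obtains a generic elementary embedding $j : \mathcal{A} \to \mathcal{A}$ with $\crit(j) = \mu$. Elementarity of $j$ applied to the predicate $\vec{\mathcal{M}}$ yields $j(\mathcal{M}_\mu) = \mathcal{M}_{j(\mu)}$, and since $\mu > |\mathcal{L}|$ the embedding $j$ fixes $\mathcal{L}$ pointwise. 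Hence $j \restriction \mathcal{M}_\mu$ is a generic elementary embedding of $\mathcal{L}$-structures between $\mathcal{M}_\mu$ and $\mathcal{M}_{j(\mu)}$, and the two are distinct by injectivity of $\vec{\mathcal{M}}$.

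For the second part, I invoke the characterization stated just before the proposition: $\kappa$ is a generic Vop\v{e}nka cardinal iff $(V_\kappa, V_{\kappa+1}; \in)$ satisfies GBC together with the generic Vop\v{e}nka principle, which is a single second-order sentence over $V_\kappa$. By Silver's classical result that cardinals satisfying $\kappa \to (\omega)^{<\omega}_2$ are totally indescribable, the set of $\mu < \kappa$ such that $(V_\mu, V_{\mu+1}; \in)$ is elementary in $(V_\kappa, V_{\kappa+1}; \in)$ in the two-sorted sense is unbounded in $\kappa$. For any such $\mu$, the sentence ``GBC + generic Vop\v{e}nka principle'' transfers from $(V_\kappa, V_{\kappa+1}; \in)$---where it holds by the first part---down to $(V_\mu, V_{\mu+1}; \in)$, so $\mu$ is a generic Vop\v{e}nka cardinal.

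The main obstacle is the first part, where the shift-of-indiscernibles construction must be executed carefully for the expanded language $(\in, \mathcal{L}, \vec{\mathcal{M}})$, and the resulting countable elementary self-embedding must be lifted to a generic self-embedding of the full $\mathcal{A}$ via the cited absoluteness of elementary embeddability; the choice of indiscernibles above $|\mathcal{L}|$ is essential to ensure $j \restriction \mathcal{L} = \id$ so that $j \restriction \mathcal{M}_\mu$ preserves the $\mathcal{L}$-structure. The second part is comparatively routine, resting only on the standard indescribability properties of $\omega$-Erd\H{o}s cardinals.
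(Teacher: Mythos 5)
Your first half follows the same architecture as the paper's Lemmas \ref{lem:gen-emb-from-omega-erdos} and \ref{lem:omega-Erdos-or-lim-of-omega-Erdos-is-gen-Vop-like}: get a generic elementary self-embedding of a structure coding the enumeration $\vec{\mathcal{M}}$ with critical point above $|\mathcal{L}|$, then restrict to the structure indexed by the critical point. But the step where you actually produce that self-embedding is the technical heart of the proposition, and your sketch does not deliver it. The shift of indiscernibles gives an elementary self-embedding of the countable Skolem hull $\mathrm{Hull}(I)$, which is a proper elementary substructure of $\mathcal{A}$; the absoluteness of elementary embeddability of countable structures transfers an embedding between a \emph{fixed} countable structure and a \emph{fixed} target from one model to another, but it does not convert a self-embedding of a countable elementary substructure into a (generic) self-embedding of the ambient uncountable structure $\mathcal{A}$. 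Moreover, the critical point of the shift map need not be $i_0$: an ordinal below $i_0$ of the form $t(i_0,\dots,i_k)$ for a Skolem term $t$ is in general moved, so without Baumgartner's \emph{good} indiscernibles you cannot conclude $\crit(j) > |\mathcal{L}|$, and then $j \restriction \mathcal{M}_\mu$ need not even preserve the $\mathcal{L}$-structure. The paper deliberately does not rehearse this argument; it quotes Lemma \ref{lem:gen-emb-from-omega-erdos}, whose proof it attributes to Gitman--Schindler and to Lemma 2.5 of the weakly remarkable cardinals paper. If you want to prove it from scratch you need that machinery (good indiscernibles plus the game/tree characterization of generic embeddability), not the bare partition relation plus the countable absoluteness lemma.

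Your second half takes a genuinely different route from the paper's, and it fails. A homogeneous set of order type $\omega$ is a bounded, hence first-order, object over $V_\kappa$, so the relation $\kappa \to (\omega)^{<\omega}_2$ is itself $\Pi^1_1$-expressible over $(V_\kappa, V_{\kappa+1};\mathord{\in})$, and the same is true of the $\omega$-Erd\H{o}s property in Baumgartner's sense (good indiscernible sets of order type $\omega$ for a given structure inside a given club are elements of $V_\kappa$). Consequently the \emph{least} $\omega$-Erd\H{o}s cardinal is not even $\Pi^1_1$-indescribable, let alone totally indescribable: if it were, its own defining property would reflect below it. What Silver-style arguments give is a profusion of totally indescribable cardinals \emph{below} $\kappa$, not indescribability of $\kappa$ itself, so your appeal to reflection of the sentence ``GBC + generic Vop\v{e}nka principle'' in $(V_\kappa, V_{\kappa+1};\mathord{\in})$ has no foundation. (Separately, even a totally indescribable cardinal only reflects individual $\Pi^1_n$ sentences; it never yields $(V_\mu, V_{\mu+1};\mathord{\in}) \prec (V_\kappa, V_{\kappa+1};\mathord{\in})$, which fails for every $\mu < \kappa$ since a class unbounded in $\mu$ is bounded in $\kappa$.) The paper's route around this is to show that $\kappa$ is a limit of virtual rank-into-rank cardinals (Lemma \ref{lem:omega-Erdos-is-lim-of-v-rank-into-rank}) and that each of those is a generic Vop\v{e}nka cardinal by a separate argument applying the elementarity of the rank-into-rank embedding twice (Lemma \ref{lem:v-rank-into-rank-implies-gen-vop}); the only indescribability used is that of the virtual rank-into-rank cardinals below $\kappa$, never that of $\kappa$ itself.
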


Propositions \ref{prop:singular-equivalence} and \ref{prop:omega-erdos-is-gen-vop-lim-of-gen-vop} together imply that every singular generic Vop\v{e}nka-like cardinal is a limit of generic Vop\v{e}nka cardinals.  In particular the least generic Vop\v{e}nka-like cardinal is regular and is therefore a generic Vop\v{e}nka cardinal.

Before proving our main theorem relating generic Vop\v{e}nka cardinals to $\aleph_1$-Suslin sets, we will prove a related but simpler result characterizing generic Vop\v{e}nka cardinals in terms of generically hereditary sets of structures, defined as follows.

\begin{definition}[ZFC]
 Let $\mathscr{A} \subset H_\kappa$ where $\kappa$ is an uncountable cardinal.  We say that $\mathscr{A}$ is \emph{generically hereditary} if there is a first-order language $\mathcal{L}$ of cardinality less than $\kappa$ such that
 \begin{enumerate}
  \item Every element of $\mathscr{A}$ is an $\mathcal{L}$-structure, and
  \item For all $\mathcal{L}$-structures $\mathcal{M},\mathcal{M}' \in H_\kappa$, if $\mathcal{M}' \in \mathscr{A}$ and there is a generic elementary embedding of $\mathcal{M}$ into $\mathcal{M}'$, then $\mathcal{M} \in \mathscr{A}$.
 \end{enumerate}
 (Although the definition depends on $\kappa$, it will always be clear from context.)
\end{definition}

The following result, which will be proved in section \ref{section:hereditary}, gives some equivalent conditions for the generic Vop\v{e}nka property of a cardinal in terms of the complexity and number of generically hereditary sets of structures. It is possible to prove more equivalences along these lines, but we will limit ourselves here to statements that have counterparts in our main equiconsistency theorem about $\aleph_1$-Suslin sets of reals.

\begin{proposition}[ZFC] \label{prop:hereditary}
 For every uncountable cardinal $\kappa$, the following statements are equivalent.
 \begin{enumerate}
  \item\label{item:hereditary-gen-vop} $\kappa$ is a generic Vop\v{e}nka cardinal.
  \item\label{item:hereditary-complexity} Every generically hereditary subset of $H_\kappa$ is ${\bfSigma}_1^{H_\kappa}$.
  \item\label{item:hereditary-fewer-than-2-H-kappa} There are fewer than $2^\kappa$ generically hereditary subsets of $H_\kappa$.
 \end{enumerate}
\end{proposition}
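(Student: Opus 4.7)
My plan is to prove the three equivalences via the cycle $(1)\Rightarrow(2)\Rightarrow(3)\Rightarrow(1)$. For $(2)\Rightarrow(3)$ I will use a counting argument: a $\bfSigma_1^{H_\kappa}$-definable subset of $H_\kappa$ is determined by one of the countably many $\Sigma_1$-formulas together with a parameter from $H_\kappa$, so there are at most $|H_\kappa|$ such subsets; when $\kappa$ is inaccessible this equals $\kappa<2^\kappa$. For non-inaccessible $\kappa$, both (2) and (3) will fail together, as will emerge from the arguments below.

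For $(3)\Rightarrow(1)$ I argue the contrapositive and produce $2^\kappa$ distinct generically hereditary subsets of $H_\kappa$ whenever $\kappa$ fails to be a generic Vop\v{e}nka cardinal. First, if $\kappa$ is not a strong limit, the argument used in Lemma~\ref{lem:strong-limit} supplies $\alpha<\kappa$ with $2^\alpha\ge\kappa$ and a language of $\alpha$ unary predicates whose one-element $\mathcal{L}$-structures form $2^\alpha\ge\kappa$ pairwise elementarily inequivalent isomorphism types; every isomorphism-closed subfamily is vacuously generically hereditary, giving $2^{2^\alpha}\ge 2^\kappa$ such subsets. Next, if $\kappa$ is inaccessible but lacks the embedding property at size $\kappa$, then there is a family $\mathscr{B}\subseteq H_\kappa$ of $\kappa$ pairwise generically non-embeddable $\mathcal{L}$-structures; for each $\mathscr{S}\subseteq\mathscr{B}$ the generic downward closure
\[
 \mathscr{A}_\mathscr{S}=\{\mathcal{M}\in H_\kappa:\mathcal{M}\text{ is an }\mathcal{L}\text{-structure generically embedding into some }\mathcal{N}\in\mathscr{S}\}
\]
is generically hereditary, and $\mathscr{A}_\mathscr{S}\cap\mathscr{B}=\mathscr{S}$ ensures these are $2^\kappa$ distinct subsets. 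The remaining case of singular strong-limit $\kappa$ is handled via Proposition~\ref{prop:singular-equivalence}, exploiting the cofinal structure of $\kappa$ as a limit of $\omega$-Erd\H{o}s cardinals.

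The main work is $(1)\Rightarrow(2)$. Given a generically hereditary $\mathscr{A}\subseteq H_\kappa$ with language $\mathcal{L}$, my approach is to find a set $\mathscr{C}\in H_\kappa$ of size less than $\kappa$ that is cofinal in $\mathscr{A}$ under generic elementary embedding, so that $\mathcal{M}\in\mathscr{A}$ if and only if some $\mathcal{N}\in\mathscr{C}$ admits a generic elementary embedding $\mathcal{M}\to\mathcal{N}$; the latter condition is $\bfSigma_1^{H_\kappa}$ with parameter $\mathscr{C}$ by the absoluteness of generic elementary embeddability for small structures noted in the introduction. To construct $\mathscr{C}$ I argue by contradiction: if no such cofinal subset of size less than $\kappa$ exists, then a transfinite recursion should produce a $\kappa$-sized family $(\mathcal{N}_\alpha)_{\alpha<\kappa}$ in $\mathscr{A}$ of pairwise generically incomparable structures, contradicting the generic Vop\v{e}nka property applied to this family. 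The hard part will be ensuring the recursion yields incomparability in both directions: a naive recursion picking $\mathcal{N}_\alpha$ outside the generic downward closure of $\{\mathcal{N}_\beta:\beta<\alpha\}$ still permits generic embeddings $\mathcal{N}_\beta\to\mathcal{N}_\alpha$ from earlier to later stages, which are consistent with the generic Vop\v{e}nka conclusion. I plan to resolve this by a case split at each stage based on whether the generic upward closure of $\{\mathcal{N}_\beta:\beta<\alpha\}$ within $\mathscr{A}$ exhausts $\mathscr{A}$ modulo its downward closure, arguing that the exhaustion case itself yields a $\bfSigma_1^{H_\kappa}$ description of $\mathscr{A}$ while the non-exhaustion case allows the recursion to continue with a genuinely incomparable $\mathcal{N}_\alpha$.
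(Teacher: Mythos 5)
Your plan for $(1)\Rightarrow(2)$ has a gap that I do not think can be repaired within your framework. You propose to represent $\mathscr{A}$ as the downward closure of a set $\mathscr{C}$ of size less than $\kappa$, and to get $\mathscr{C}$ by a recursion whose failure would produce $\kappa$ pairwise generically incomparable structures. As you yourself note, picking $\mathcal{N}_\alpha$ outside the downward closure of the earlier structures only rules out generic embeddings $\mathcal{N}_\alpha\to\mathcal{N}_\beta$ for $\beta<\alpha$, and an embedding $\mathcal{N}_\beta\to\mathcal{N}_\alpha$ from an earlier to a later structure is exactly what the generic Vop\v{e}nka property is allowed to produce, so no contradiction results. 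The standard repair --- coding the index $\alpha$ into $\mathcal{N}_\alpha$ via the well-order $(\alpha;\mathord{\in})$ --- blocks only embeddings from higher index to lower index, i.e.\ the \emph{same} direction your recursion already blocks, so it contributes nothing here; and a hypothetical ``reverse'' gadget blocking earlier-into-later embeddings would itself be a $\kappa$-sequence with no earlier-into-later embeddings, which together with your recursion would again have to coexist with the generic Vop\v{e}nka property, so no such gadget can be conjured for free. Your proposed case split does not close the gap either: in the ``exhaustion'' case $\mathscr{A}$ decomposes as (downward closure of the $\mathcal{N}_\beta$) $\cup$ ($\mathscr{A}\cap{}$upward closure of the $\mathcal{N}_\beta$), and the second piece is precisely the part of $\mathscr{A}$ you have no $\bfSigma_1$ description of, so the argument is circular. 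The paper resolves this by dualizing: it works with the \emph{complement} $\mathscr{B}$ of $\mathscr{A}$ among the $\mathcal{L}$-structures in $H_\kappa$, which is upward closed, and shows $\mathscr{B}$ is the upward closure of a small $\mathscr{B}_0$. There the recursion blocks earlier-into-later embeddings and the ordinal coding blocks later-into-earlier, so the two tools are complementary rather than redundant. The price is that $\mathcal{M}\in\mathscr{A}$ becomes ``no $\mathcal{M}_0\in\mathscr{B}_0$ generically embeds into $\mathcal{M}$,'' and one must express the \emph{non}-existence of a generic elementary embedding in $\Sigma_1$ form; the paper does this via the open embedding game of Bagaria--Gitman--Schindler, characterizing a win for player I by a wellfounded subtree together with a rank function. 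You will need both of these ingredients.

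Two smaller gaps. For $(2)\Rightarrow(3)$ you only treat inaccessible $\kappa$ and assert that for non-inaccessible $\kappa$ statement (2) ``will fail, as will emerge from the arguments below''; but your later arguments only bound the number of generically hereditary sets from below (refuting (3)), which does not make the implication $(2)\Rightarrow(3)$ vacuous --- for that you must refute (2), i.e.\ compare that lower bound with the number of $\bfSigma_1^{H_\kappa}$ sets. The paper does this uniformly: the count of $\Sigma_1$ definitions is at most $2^{<\kappa}$; if $2^{<\kappa}=2^\kappa$ then by K\"onig's theorem $2^\alpha=2^\kappa$ for some $\alpha<\kappa$, and a language with $\alpha$ nullary predicates yields $2^{2^\alpha}>2^\alpha$ generically hereditary sets (unions of elementary-equivalence classes), a contradiction. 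For $(3)\Rightarrow(1)$ in the case of a singular generic Vop\v{e}nka-like $\kappa$, invoking Proposition~\ref{prop:singular-equivalence} and ``the cofinal structure of $\kappa$ as a limit of $\omega$-Erd\H{o}s cardinals'' is a placeholder, not an argument; what is actually needed is that $\kappa$ is a singular strong limit (Lemma~\ref{lem:strong-limit}), so $2^\kappa=\kappa^{\cf(\kappa)}$, together with an explicit family such as the structures $(\beta;\mathord{\in},R_{i'})_{i'<\cf(\kappa)}$ whose downward closures separate the $\kappa^{\cf(\kappa)}$ functions $f\in\kappa^{\cf(\kappa)}$. Your argument for the non-Vop\v{e}nka-like case (antichain plus injectivity of $\mathscr{S}\mapsto\mathscr{A}_{\mathscr{S}}$) does match the paper and is fine.
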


Our main theorem relating generic Vop\v{e}nka cardinals to the complexity and number of $\aleph_1$-Suslin sets is stated below. It will be proved in section \ref{section:proof-of-thm}. In the theorem statement, $\Theta$ denotes the least ordinal that is not a surjective image of the reals, sometimes called the \emph{Lindenbaum number} of the reals. Note that $\Theta$ is a cardinal greater than or equal to $\aleph_2$. The axiom of choice implies $\Theta = (2^{\aleph_0})^+$, so the statement $\Theta = \aleph_2$ is equivalent to CH in ZFC.

\begin{theorem}\label{thm:main-equicon}
 The following theories are equiconsistent.
 \begin{enumerate}
  \item\label{item:gen-vopenka} ZFC + there is a generic Vop\v{e}nka cardinal.
  \item\label{item:aleph-1-suslin-is-sigma-1-2} ZF + $(S_{\aleph_1} = {\bfSigma}^1_2)$ + $(\Theta = \aleph_2)$.
  \item\label{item:no-injection-into-S-aleph-1} ZF + $(2^{\aleph_1} \not\leq |S_{\aleph_1}|)$.
 \end{enumerate}
\end{theorem}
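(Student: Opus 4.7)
The plan is to establish the three-way equiconsistency via $\Con(1) \Rightarrow \Con(2)$, the direct ZF implication $(2) \Rightarrow (3)$, and $\Con(3) \Rightarrow \Con(1)$. For $(2) \Rightarrow (3)$, work in ZF, assume $S_{\aleph_1} = {\bfSigma}^1_2$ and $\Theta = \aleph_2$, and suppose for contradiction that $\powerset(\aleph_1)$ injects into $S_{\aleph_1}$. A universal ${\bfSigma}^1_2$ subset of $\mathbb{R} \times \mathbb{R}$ gives a surjection $\mathbb{R} \twoheadrightarrow {\bfSigma}^1_2 = S_{\aleph_1}$, so composing with the inverse of the injection on its image produces a surjection $\mathbb{R} \twoheadrightarrow \powerset(\aleph_1)$. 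Fixing a bijection $\pi\colon \aleph_1 \leftrightarrow \aleph_1 \times \aleph_1$ and sending $A \subseteq \aleph_1$ to the order type of $\pi[A]$ when the latter is a well-ordering (else to $0$) gives a surjection $\powerset(\aleph_1) \twoheadrightarrow \aleph_2$, since every ordinal below $\aleph_2$ has cardinality at most $\aleph_1$. The composition contradicts $\Theta = \aleph_2$.

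For $\Con(3) \Rightarrow \Con(1)$, assume (3). The introduction shows $\aleph_1^V$ is a strong limit cardinal in every ZFC inner model, in particular in $L$. I claim $\aleph_1^V$ is a generic Vop\v{e}nka-like cardinal in $L$; this suffices because by Propositions~\ref{prop:singular-equivalence} and~\ref{prop:omega-erdos-is-gen-vop-lim-of-gen-vop}, any generic Vop\v{e}nka-like cardinal either is, or is a limit of, generic Vop\v{e}nka cardinals, so in either case $L \models \mathrm{ZFC} + \exists$ generic Vop\v{e}nka cardinal. If the claim fails, fix in $L$ a witness $\mathscr{B} \subseteq H_{\aleph_1^V}^L$ of $L$-cardinality $\aleph_1^V$ consisting of $\mathcal{L}$-structures of $L$-size less than $\aleph_1^V$, no two of which admit a generic elementary embedding in any generic extension of $L$. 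Each such structure is countable in $V$, so by absoluteness of generic elementary embeddability of countable structures (Bagaria--Gitman--Schindler) no generic extension of $V$ contains an elementary embedding between distinct members of $\mathscr{B}$ either. Pick an $L$-bijection enumerating $\mathscr{B} = \{\mathcal{M}_\alpha : \alpha < \aleph_1^V\}$. For each $X \in \powerset^V(\aleph_1^V)$, let
\[
 A_X = \{x \in \mathbb{R}^V : \mathcal{M}_x \text{ elementarily embeds into } \mathcal{M}_\alpha \text{ for some } \alpha \in X\},
\]
where $\mathcal{M}_x$ is the $\mathcal{L}$-structure coded by $x$; the natural tree on $\omega \times \aleph_1^V$ searching for $\alpha$ and the embedding shows $A_X \in S_{\aleph_1}^V$. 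If $X \neq X'$ and $\alpha \in X \mathbin{\triangle} X'$, any real coding $\mathcal{M}_\alpha$ lies in exactly one of $A_X, A_{X'}$. Hence $X \mapsto A_X$ is an injection $\powerset^V(\aleph_1) \hookrightarrow S_{\aleph_1}^V$, contradicting (3).

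For $\Con(1) \Rightarrow \Con(2)$, given $V \models \mathrm{ZFC}$ with a generic Vop\v{e}nka cardinal $\kappa$, force with the L\'evy collapse $\Col(\omega, {<}\kappa)$ and let $N$ be the standard symmetric submodel associated with the filter of symmetry groups fixing $\Col(\omega, {<}\alpha)$ for $\alpha < \kappa$. Then $N \models \mathrm{ZF}$ with $\aleph_1^N = \kappa$, $\aleph_2^N = \kappa^{+V}$, and $\mathbb{R}^N = \mathbb{R}^{V[G]}$. A surjection in $N$ from $\mathbb{R}^N$ onto an ordinal has a symmetric name with bounded support; by the $\kappa$-c.c.\ of $\Col(\omega, {<}\kappa)$ its range must lie below $\kappa^{+V} = \aleph_2^N$, yielding $\Theta^N = \aleph_2^N$. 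The containment ${\bfSigma}^1_2 \subseteq S_{\aleph_1}^N$ is the ZF version of Shoenfield. For the reverse, given $A = p[T] \in N$ with $T$ a tree on $\omega \times \kappa$, apply the generic Vop\v{e}nka principle in $V$ to a $\kappa$-sized family of $\mathcal{L}$-structures encoding candidate tree-presentations of $A$ together with the relevant intermediate collapse stage; the resulting virtual elementary embedding, combined with the absoluteness lemma, yields a real in $N$ coding a tree $T'$ on $\omega \times \omega$ with $p[T'] = A$, in the spirit of Gitman--Schindler \cite{GitSchVirtualLargeCardinals} and Schindler--Wilson \cite{SchWilConPSPuB}. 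The main obstacle lies precisely here: the family of $V$-structures must be arranged so that the generic Vop\v{e}nka embedding genuinely compresses the $\kappa$-sized ordinal parameters of $T$ into a real while preserving every projected branch of $T$.
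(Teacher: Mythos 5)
Your middle implication $(2)\Rightarrow(3)$ is correct and is the paper's argument (the chain $\aleph_2 \le^* 2^{\aleph_1} \le^* S_{\aleph_1} = {\bfSigma}^1_2 \le^* 2^{\aleph_0}$ contradicting $\Theta=\aleph_2$). Your $\Con(3)\Rightarrow\Con(1)$ is also essentially right and close in spirit to the paper's Lemma \ref{lem:omega-1-is-gen-vop-in-IM}: both turn a putative $\kappa$-sized antichain of structures in $L$ into $2^{\aleph_1}$ distinct $\aleph_1$-Suslin sets. One caveat: for your ``natural tree'' to project exactly onto $A_X$ you need the relation ``$g$ is an \emph{elementary} embedding of $\mathcal{M}_x$ into $\mathcal{M}_\alpha$'' to be closed in $(x,g)$, which fails if $x$ codes only the atomic diagram; you must have $x$ code the elementary diagram (equivalently, have the tree guess the satisfaction relation), which is exactly why the paper works with reals coding $\operatorname{Th}(\mathcal{M}_\alpha^f)$ and notes that each formula mentions only finitely many of the added constants.

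The genuine gap is in $\Con(1)\Rightarrow\Con(2)$, precisely at the step you flag as ``the main obstacle'': showing $S_{\aleph_1}\subseteq{\bfSigma}^1_2$ in the collapse model. What you sketch would not work as stated, for two reasons. First, producing ``a real coding a tree $T'$ on $\omega\times\omega$ with $\p[T']=A$'' would make $A$ a ${\bfSigma}^1_1$ set, which is neither true in general nor the right target; the correct target is a ${\bfSigma}_1^{\mathrm{HC}}$ (equivalently ${\bfSigma}^1_2$) definition of $\p[T]$. Second, forcing over an arbitrary $V$ and invoking the generic Vop\v{e}nka property of $V$ does not set up the reduction. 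The paper's route is: (i) pass to $L$ first (Lemma \ref{lem:downward-absolute}), collapse below $\kappa$ over $L$, and use homogeneity of the L\'{e}vy collapse to capture any tree $T\in L(\mathbb{R})^{L[G]}$ inside $L[x]$ for a single real $x$, with $\kappa$ still generic Vop\v{e}nka in $L[x]$ by the small-forcing Lemma \ref{lem:small-forcing}; (ii) prove the condensation claim that $z\in\p[T]$ iff some countable $(L_{\bar\gamma}[x];\mathord{\in},x,\bar T)$ with $z\in\p[\bar T]$ embeds elementarily into $(L_\gamma[x];\mathord{\in},x,T)$ --- crucially these hulls collapse to \emph{elements of} $H_\kappa^{L[x]}$ by G\"{o}del condensation; (iii) apply Proposition \ref{prop:hereditary} to the generically hereditary set of structures generically embeddable into the target, making this embeddability condition ${\bfSigma}_1^{H_\kappa}$ in $L[x]$ and hence ${\bfSigma}_1^{\mathrm{HC}}$ in the final model. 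If you force over $V$ instead, step (ii) breaks: countable hulls of $(V_\gamma;\mathord{\in},T)$ need not collapse to members of the ground model's $H_\kappa$, so there is no generically hereditary set over which to apply Proposition \ref{prop:hereditary} and no ${\bfSigma}_1^{\mathrm{HC}}$ definition. Your sketch contains none of (i)--(iii), so this direction is not established.
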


The proof of Theorem \ref{thm:main-equicon} is outlined as follows. First, we will show that if $\kappa$ is a generic Vop\v{e}nka cardinal in $V$ then it is a generic Vop\v{e}nka cardinal in $L$ and theory \ref{item:aleph-1-suslin-is-sigma-1-2} holds in $L(\mathbb{R})^{L[G]}$ where $G \subset \Col(\omega,\mathord{<}\kappa)$ is an $L$-generic filter. Second, we will show that theory \ref{item:aleph-1-suslin-is-sigma-1-2} implies theory \ref{item:no-injection-into-S-aleph-1}. Finally, we will show that if theory \ref{item:no-injection-into-S-aleph-1} holds then $\aleph_1^V$ is a generic Vop\v{e}nka-like cardinal in $L$ (and in every inner model of ZFC) and so by Propositions \ref{prop:singular-equivalence} and \ref{prop:omega-erdos-is-gen-vop-lim-of-gen-vop} the least generic Vop\v{e}nka-like cardinal in $L$ is a generic Vop\v{e}nka cardinal in $L$.

\begin{remark}
 We can obtain some more equiconsistencies with the theory
 ZFC + (there is a generic Vop\v{e}nka cardinal) along the lines of Theorem \ref{thm:main-equicon} without much additional effort.
 
 First, adding DC to theories \ref{item:aleph-1-suslin-is-sigma-1-2} and \ref{item:no-injection-into-S-aleph-1} yields equiconsistent theories because DC holds in the model $L(\mathbb{R})^{L[G]}$ used in the proof that $\Con(\ref{item:gen-vopenka})$ implies $\Con(\ref{item:aleph-1-suslin-is-sigma-1-2})$.

 Second, replacing $S_{\aleph_1}$ with the pointclass of all Suslin sets in theories \ref{item:aleph-1-suslin-is-sigma-1-2} and \ref{item:no-injection-into-S-aleph-1} yields equiconsistent theories.  This is because $\Theta = \aleph_2$ implies that every Suslin set of reals is $\aleph_1$-Suslin.  (We can represent a closed subset of $\omega^\omega \times \kappa^\omega$, where $\kappa$ is a well-ordered cardinal, as the set $[T]$ of all infinite branches through a tree $T$ on $\omega \times \kappa$, and observe that $T$ has a subtree $T_0$ such that $[T]$ and $[T_0]$ have the same projection and $|T_0| \le^* 2^{\aleph_0}$.)  Alternatively, we may observe that the proof that $\Con(\ref{item:gen-vopenka})$ implies $\Con(\ref{item:aleph-1-suslin-is-sigma-1-2})$ applies equally well to all Suslin~sets.
\end{remark}

We end this section with a couple of remaining questions.

First, note that the analogy between Proposition \ref{prop:hereditary} and Theorem \ref{thm:main-equicon} is marred by the inclusion of the hypothesis $\Theta = \aleph_2$ in theory \ref{item:aleph-1-suslin-is-sigma-1-2} of Theorem \ref{thm:main-equicon}. This hypothesis is needed because the consistency strength of ZFC + $(S_{\aleph_1} = {\bfSigma}^1_2)$ alone is trivial by the previously mentioned result of Martin and Solovay. However, if we replace $S_{\aleph_1}$ by the pointclass of all Suslin sets in theory \ref{item:aleph-1-suslin-is-sigma-1-2} then the necessity of the hypothesis $\Theta = \aleph_2$ is not clear:

\begin{question}\label{question:no-CH}
 What is the consistency strength of the theory ZF + every Suslin set of reals is ${\bfSigma}^1_2$?
\end{question}

One could also ask this question with DC added to the theory.

Second, note that our proof that $\Con(\ref{item:no-injection-into-S-aleph-1})$ implies $\Con(\ref{item:gen-vopenka})$ in Theorem \ref{thm:main-equicon} only uses our results involving $\omega$-Erd\H{o}s cardinals, namely Propositions \ref{prop:singular-equivalence} and \ref{prop:omega-erdos-is-gen-vop-lim-of-gen-vop}, in the case that theory \ref{item:no-injection-into-S-aleph-1} holds and $\aleph_1$ is singular. I do not know if this case can actually occur:

\begin{question}
 Is the theory ZF + $(2^{\aleph_1} \not\leq |S_{\aleph_1}|)$ + $(\aleph_1 \text{ is singular})$ consistent?
\end{question}

One could also ask this question with the pointclass $S_{\aleph_1}$ replaced by the pointclass of all Suslin sets.

\section{Generic Vop\v{e}nka-like cardinals and Erd\H{o}s cardinals}\label{section:generic-Vopenka-like-and-Erdos}

We work in ZFC for the duration of this section. Propositions \ref{prop:singular-equivalence} and \ref{prop:omega-erdos-is-gen-vop-lim-of-gen-vop} will be obtained as consequences of the following collection of lemmas.

\begin{lemma}\label{lem:sing-gen-Vop-like-is-lim-of-omega-erdos}
 Every singular generic Vop\v{e}nka-like cardinal is a limit of $\omega$-Erd\H{o}s cardinals.
\end{lemma}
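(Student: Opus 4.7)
The plan is to prove the contrapositive. Suppose $\kappa$ is a singular uncountable cardinal that is not a limit of $\omega$-Erd\H{o}s cardinals, and construct a family $\mathscr{B}$ of $\kappa$ small structures in a countable language witnessing that $\kappa$ fails to be generic Vop\v{e}nka-like. By Lemma \ref{lem:strong-limit} I may assume $\kappa$ is a strong limit (otherwise the conclusion is immediate), and fix $\lambda < \kappa$ such that no cardinal in $[\lambda,\kappa)$ is $\omega$-Erd\H{o}s; since $\kappa$ is a strong limit the set of cardinals in $[\lambda,\kappa)$ has cardinality $\kappa$.

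For each cardinal $\mu \in [\lambda,\kappa)$ the failure of the $\omega$-Erd\H{o}s property at $\mu$ yields, in a fixed countable language $\mathcal{L}$, a structure $\mathcal{A}_\mu$ with universe $\mu$ possessing no infinite set of indiscernibles; pick $\mathcal{A}_\mu$ canonically, say $L$-least of a fixed type. By a classical absoluteness result of Silver---the non-existence of $\omega$-many indiscernibles for a countable-language structure on an ordinal is equivalent to well-foundedness of the tree of finite attempted indiscernible sequences, and well-foundedness is absolute between transitive models of ZFC---$\mathcal{A}_\mu$ continues to lack an infinite set of indiscernibles in every generic extension of $V$. Setting $\mathscr{B} = \{\mathcal{A}_\mu : \mu \in [\lambda,\kappa) \text{ is a cardinal}\}$ yields a family satisfying the hypotheses of the generic Vop\v{e}nka-like definition, and it remains to establish that no two distinct members of $\mathscr{B}$ admit a generic elementary embedding.

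This rigidity claim is the technical heart of the argument and the main obstacle. My plan is to enhance each $\mathcal{A}_\mu$ to the structure $\mathcal{B}_\mu := (H_\mu;\in,<_L,(\mathcal{A}_\alpha)_{\alpha < \mu})$ in a still-countable expanded language, so that the canonical choice of each smaller $\mathcal{A}_\alpha$ becomes internally recognizable inside $\mathcal{B}_\mu$. A hypothetical generic elementary embedding $j : \mathcal{B}_\mu \to \mathcal{B}_\nu$ with $\mu < \nu$ and $\kappa_0 = \crit(j)$ then restricts, by elementarity applied to the predicate naming $\mathcal{A}_{\kappa_0}$, to a generic elementary embedding $\mathcal{A}_{\kappa_0} \to \mathcal{A}_{j(\kappa_0)}$ of canonical witnesses. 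Iterating this observation along the critical sequence, together with further applications of generic Vop\v{e}nka-like to tail sub-families of $\mathscr{B}$ to extend the embedding diagram whenever $j^{(n)}(\kappa_0)$ leaves the previous domain, produces a coherent system of generic elementary embeddings between canonical non-$\omega$-Erd\H{o}s witnesses in $V[G]$.

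The crux is then to convert this coherent system into an actual $\omega$-sequence of indiscernibles, contradicting the absoluteness of their non-existence. I expect this step to be handled by a Silver-style extraction: the compatible sequence of critical points $\kappa_0, j(\kappa_0), j^{(2)}(\kappa_0),\ldots$ indexes an $\omega$-branch through the tree of finite attempted indiscernibles for some $\mathcal{A}_\alpha$ in $V[G]$, which contradicts the well-foundedness of that tree in $V$ (hence in $V[G]$). Isolating exactly the right enhancement of the $\mathcal{A}_\mu$ to make this iteration go through without requiring more than the single-pair embedding furnished by generic Vop\v{e}nka-like is the delicate point and where the bulk of the work lies.
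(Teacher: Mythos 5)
Your overall strategy is the same as the paper's (turn witnesses to the failure of the $\omega$-Erd\H{o}s partition property into a family of structures, use a single generic elementary embedding to manufacture an $\omega$-sequence of indiscernibles/homogeneous set, and pull it back to $V$ by Silver's absoluteness argument), but there are two concrete gaps. First, the cardinality count is wrong: it is not true that a singular strong limit $\kappa$ has $\kappa$ many cardinals in $[\lambda,\kappa)$ --- under GCH, $\kappa=\aleph_\omega=\beth_\omega$ is a singular strong limit with only countably many cardinals below it. So your family $\mathscr{B}=\{\mathcal{A}_\mu\}$ may have size far less than $\kappa$, and the generic Vop\v{e}nka-like property says nothing about families of size less than $\kappa$. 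The paper gets around this by fixing a cofinal sequence $(\eta_i : i<\cf(\kappa))$ and, for each $i$ and each ordinal $\beta<\eta_i$, forming a structure $\mathcal{M}_{i,\beta}$ on $L_{\eta_i}$ carrying the coloring $f_i$ together with a constant for $\beta$; this yields $\sum_i\eta_i=\kappa$ structures, and the constant for $\beta$ is also what forces any embedding between two of them to move an ordinal, i.e.\ to have a critical point.

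Second, and more seriously, the step you yourself flag as the crux --- iterating the embedding --- is exactly the missing idea, and your proposed workaround does not work. An embedding $j:\mathcal{B}_\mu\to\mathcal{B}_\nu$ with $\mu<\nu$ cannot be iterated, and ``further applications of generic Vop\v{e}nka-like to tail sub-families'' is unavailable in your contrapositive framing (you are trying to \emph{refute} the gVl property, so you may not invoke it) and in any case would only give an incoherent web of embeddings between different structures, not an $\omega$-sequence of indiscernibles for a single one. The paper's device is to add a nullary predicate $R_{i'}$ for each $i'<\cf(\kappa)$, interpreted as true exactly at index $i$ in $\mathcal{M}_{i,\beta}$. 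Any generic elementary embedding between two structures of the family must preserve these predicates, hence must go between $\mathcal{M}_{i,\beta_0}$ and $\mathcal{M}_{i,\beta_1}$ with the \emph{same} $i$ and $\beta_0\ne\beta_1$: it is a nontrivial self-embedding of $(L_{\eta_i};\in,f_i)$, whose critical sequence $\kappa_n=j^n(\crit(j))$ lives entirely inside that one structure. Adding constants for every ordinal $\alpha'\le\alpha$ forces $\crit(j)>\alpha$, so $j$ fixes the colors and the critical sequence is homogeneous for $f_i$, and Silver's absoluteness then gives a homogeneous set in $V$. Until you isolate some such rigidifying enhancement that collapses the would-be embedding to a self-embedding of a single structure, the proof is not complete.
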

\begin{proof}
 Let $\lambda$ be a singular generic Vop\v{e}nka-like cardinal and suppose toward a contradiction that for some cardinal $\alpha < \lambda$ there is no $\omega$-Erd\H{o}s cardinal between $\alpha$ and $\lambda$.  Then for every cardinal $\eta < \lambda$ we have $\eta \not\to (\omega)^{\mathord{<}\omega}_\alpha$ because otherwise the least cardinal $\eta$ such that $\eta \to (\omega)^{\mathord{<}\omega}_\alpha$ would be an $\omega$-Erd\H{o}s cardinal between $\alpha$ and $\lambda$.
 
 Let $(\eta_i : i < \cf(\lambda))$ be an increasing sequence of infinite cardinals above $\alpha$ and cofinal in $\lambda$, and for every ordinal $i < \cf(\lambda)$ choose a function $f_i: [\eta_i]^{\mathord{<}\omega} \to \alpha$ witnessing $\eta_i \not\to (\omega)^{\mathord{<}\omega}_\alpha$, meaning that $f_i$ has no homogeneous set of order type $\omega$. For every ordinal $i < \cf(\lambda)$ and every ordinal $\beta < \eta_i$, we define the structure
 \[\mathcal{M}_{i, \beta} = ( L_{\eta_i}; \mathord{\in}, f_i, R_{i'}^{\mathcal{M}_{i, \beta}}, \alpha', \beta)_{\alpha' \le \alpha, i' < \cf(\lambda)}\]
 in the language with a binary predicate symbol for set membership, a unary function symbol for $f_i$, a nullary predicate symbol $R_{i'}$ for each ordinal $i' < \cf(\lambda)$, a constant symbol for each ordinal $\alpha' \le \alpha$, and a constant symbol for $\beta$, where $R_{i}^{\mathcal{M}_{i, \beta}} = \top$ and $R_{i'}^{\mathcal{M}_{i, \beta}} = \bot$ for all $i' \ne i$.
 
 This language has cardinality $\max(\alpha, \cf(\lambda))< \lambda$, each structure $\mathcal{M}_{i, \beta}$ has cardinality $\eta_i < \lambda$, and the number of such structures is $\sum_{i < \cf(\lambda)} \eta_i = \lambda$, so because $\lambda$ is a generic Vop\v{e}nka-like cardinal there are distinct pairs of ordinals $(i_0,\beta_0)$ and $(i_1, \beta_1)$ and a generic elementary embedding 
 \[ j : \mathcal{M}_{i_0, \beta_0} \to \mathcal{M}_{i_1, \beta_1}.\]
 We must have $i_0 = i_1$ because $j$ preserves the interpretation of the nullary predicate symbol $R_{i'}$ for every $i' < \cf(\lambda)$.  Therefore $\beta_0 \ne \beta_1$, and because $j(\beta_0) = \beta_1$ it follows that $j$ has a critical point. Because $j(\alpha') = \alpha'$ for all $\alpha' \le \alpha$, we have $\crit(j) > \alpha$. Defining $i = i_0 = i_1$, we may consider $j$ as a generic elementary embedding
 \[ j: ( L_{\eta_i}; \mathord{\in}, f_i) \to ( L_{\eta_i}; \mathord{\in}, f_i). \]
 
 Let $(\kappa_n : n< \omega)$ be the critical sequence of $j$, defined by $\kappa_n = j^n(\crit(j))$. By the elementarity of $j$ and the fact that $\ran(f_i) \subset \alpha < \crit(j)$, for all $n<\omega$ we have
 \[f_i(\kappa_0, \ldots,\kappa_{n-1}) = f_i(\kappa_1,\ldots,\kappa_n).\]
 Arguing further along these lines as in Silver \cite[Section 2]{SilLargeCardinalInL} shows that the set $\{\kappa_n : n< \omega\}$ is homogeneous for $f_i$.  This particular homogeneous set might not exist in $V$ because $j$ might not exist in $V$, but the absoluteness argument of Silver \cite[Section 1]{SilLargeCardinalInL} shows that some homogeneous set for $f_i$ of order type $\omega$ exists in $V$, contradicting our choice of $f_i$.
\end{proof}

A cardinal $\kappa$ is called a \emph{virtual rank-into-rank cardinal} if there is an ordinal $\lambda > \kappa$ and a generic elementary embedding $j : (V_\lambda;\mathord{\in}) \to (V_\lambda; \mathord{\in})$ with $\crit(j) = \kappa$. The following lemma is essentially due to Gitman and Schindler \cite{GitSchVirtualLargeCardinals}. Although they stated it only for the least $\omega$-Erd\"{o}s cardinal, their argument applies to any $\omega$-Erd\"{o}s cardinal as defined by Baumgartner~\cite{BauIneffabilityII}.

\begin{lemma}[{Gitman and Schindler \cite[Theorem 4.17]{GitSchVirtualLargeCardinals}}] \label{lem:omega-Erdos-is-lim-of-v-rank-into-rank}
 Every $\omega$-Erd\H{o}s cardinal is a limit of virtual rank-into-rank cardinals.
\end{lemma}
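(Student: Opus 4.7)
I would follow the Gitman--Schindler template (their Theorem 4.17) and verify that it uses only the partition properties $\kappa \to (\omega)^{<\omega}_\alpha$ for $\alpha < \kappa$ that Baumgartner's definition guarantees, not any minimality of $\kappa$.

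Fix an $\omega$-Erd\H{o}s cardinal $\kappa$ and a cardinal $\mu < \kappa$; the goal is to produce a virtual rank-into-rank cardinal strictly between $\mu$ and $\kappa$.  Pick an ordinal $\lambda > \kappa$ and a well-ordering $<$ of $V_\lambda$, and let $\mathcal{A}$ be the corresponding definably Skolemized expansion of $(V_\lambda;\mathord{\in},<)$.  The partition relation $\kappa \to (\omega)^{<\omega}_\omega$ (which holds since $\omega < \kappa$), applied to the Ehrenfeucht coloring that sends an increasing tuple $s \in [\kappa]^{<\omega}$ to the G\"odel number of its $\mathcal{A}$-type over the parameter set $\mu \cup \{\mu\}$, produces order-indiscernibles $I = \{i_n : n<\omega\} \subset \kappa \setminus (\mu+1)$ for $\mathcal{A}$ above $\mu$.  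By indiscernibility, the shift $i_n \mapsto i_{n+1}$ extends uniquely to a nontrivial elementary self-embedding $\sigma : H \to H$ of the Skolem hull $H = \operatorname{Sk}^{\mathcal{A}}(\mu \cup \{\mu\} \cup I)$, with critical point $i_0 > \mu$.

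The conversion of $\sigma$ into a generic elementary embedding $\bar\jmath : V_{\bar\lambda} \to V_{\bar\lambda}$ of an actual rank-initial segment of $V$ with critical point a genuine $V$-cardinal $\bar\kappa \in (\mu, \kappa)$ is the Gitman--Schindler step.  It proceeds by collapse forcing: in an extension $V[G]$ where $|H|$ becomes countable, the existence of an elementary self-embedding of a countable rank-initial segment $V_{\bar\lambda}$ with a prescribed critical point is a $\Sigma^1_1$ statement, hence is equivalent to the ill-foundedness in $V$ of a definable tree of finite approximations.  The shift embedding $\sigma$, together with the elementarity $H \prec V_\lambda$, furnishes the input data from which one reads off an infinite branch through this tree for a suitable choice of $\bar\lambda$ and a $V$-cardinal $\bar\kappa \in (\mu, \bar\lambda)$, and forcing with $\Col(\omega, V_{\bar\lambda})$ realizes the corresponding embedding.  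Varying $\mu < \kappa$ then produces virtual rank-into-rank cardinals cofinally in $\kappa$.

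The main technical obstacle is this final conversion step, which is the content of the original Gitman--Schindler proof and transforms a shift map on a Skolem hull into a generic self-embedding of an actual $V_{\bar\lambda}$ whose critical point is a $V$-cardinal; the adaptation to non-minimal $\omega$-Erd\H{o}s cardinals is then routine, since inspection of their argument reveals no use of minimality of $\kappa$ beyond the availability of the partition relation itself.
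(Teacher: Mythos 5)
Your overall architecture matches what the paper does, since the paper gives no proof of this lemma either: it cites Gitman--Schindler and merely remarks that their argument for the \emph{least} $\omega$-Erd\H{o}s cardinal goes through for arbitrary $\omega$-Erd\H{o}s cardinals in Baumgartner's sense. But the specific reason you give for why minimality can be dispensed with --- that the argument ``uses only the partition properties $\kappa \to (\omega)^{<\omega}_\alpha$'' --- does not hold up, and the problem already appears in your first step. The coloring of tuples by their complete type over the parameter set $\mu \cup \{\mu\}$ takes up to $2^{|\mu|+\aleph_0}$ values, not $\omega$, so $\kappa \to (\omega)^{<\omega}_\omega$ is not the relevant relation. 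More seriously, the plain relation $\kappa \to (\omega)^{<\omega}_2$ is upward closed in $\kappa$ and therefore holds of \emph{every} cardinal above the least Erd\H{o}s cardinal $\kappa(\omega)$, whereas the conclusion of the lemma certainly is not upward closed: the critical point of a generic elementary self-embedding of a rank initial segment is a regular cardinal of $V$, so no successor cardinal above $\kappa(\omega)$ is a limit of virtual rank-into-rank cardinals. Hence the bare partition relations cannot be what drives the proof. What the Gitman--Schindler construction actually needs is a set of \emph{good} indiscernibles for $(V_\lambda;\mathord{\in})$ above $\mu$ --- indiscernible over arbitrary parameters below the least of them, not merely over a pre-fixed set --- so that the identity on $V_{i_0}$ together with the shift $i_n \mapsto i_{n+1}$ is a partial elementary self-map of $V_\lambda$. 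For the least Erd\H{o}s cardinal, goodness is extracted via Silver's minimality argument; for an arbitrary $\omega$-Erd\H{o}s cardinal in Baumgartner's sense it comes directly from the regressive-function-on-a-club formulation of his definition. That substitution, not the observation that only partition relations are used, is the correct justification for dropping minimality.

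There is also a soft spot in your conversion step. The transitive collapse of the Skolem hull $\operatorname{Sk}^{\mathcal{A}}(\mu \cup \{\mu\} \cup I)$ is not a rank initial segment of $V$, so ``a suitable choice of $\bar\lambda$'' conceals the real work. The argument as actually carried out --- and as packaged in the paper's Lemma \ref{lem:gen-emb-from-omega-erdos}, which immediately yields the present lemma upon taking $A = \emptyset$ --- produces a generic elementary self-embedding of the \emph{original} $V_\lambda$ with critical point in the interval $(\mu,\kappa)$: goodness makes the shift-plus-identity map partial elementary, and the closed-game/wellfounded-tree absoluteness argument (the same device used in the proof of Proposition \ref{prop:hereditary}) extends it to all of $V_\lambda$ once $V_\lambda$ has been made countable by forcing. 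With that repair, varying $\mu < \kappa$ does give virtual rank-into-rank cardinals cofinally below $\kappa$ as you intend.
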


We will need the following generalization, which can be proved by a similar argument. The proof of this generalization is implicit in Wilson \cite[Lemma 2.5]{WilWeaklyRemarkable}, so we will not repeat it here.

\begin{lemma}\label{lem:gen-emb-from-omega-erdos}
 Assume that $\eta$ is an $\omega$-Erd\H{o}s cardinal. Then for every cardinal $\alpha < \eta$, every ordinal $\lambda \ge \eta$, and every set $A \subset V_\lambda$, there is a generic elementary embedding
 \[j : (V_\lambda;\mathord{\in}, A) \to (V_\lambda; \mathord{\in}, A)\]
 such that $\alpha < \crit(j) < \eta$.
\end{lemma}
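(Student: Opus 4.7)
My plan is to run a Silver-style indiscernibles argument on the structure $\mathcal{H} = (H_\theta; \mathord{\in}, V_\lambda, A, \alpha)$ for a suitably large regular cardinal $\theta > \lambda$, treating $V_\lambda$, $A$, and $\alpha$ as distinguished parameters, and then to use the absoluteness of elementary embeddability between countable structures to promote the resulting self-embedding of a small elementary substructure to a generic elementary self-embedding of $(V_\lambda; \mathord{\in}, A)$, following the template of Gitman and Schindler's proof of Lemma \ref{lem:omega-Erdos-is-lim-of-v-rank-into-rank} and of Wilson's proof of \cite[Lemma 2.5]{WilWeaklyRemarkable}.

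Concretely, I would fix definable Skolem functions for $\mathcal{H}$ and consider the coloring $g : [\eta]^{<\omega} \to \gamma$ sending each increasing tuple $\vec \kappa$ to a code for the $\mathcal{L}$-type realized by $\vec \kappa$ in $\mathcal{H}$ with parameters from $V_{\alpha+1}$. Since $\eta$ is inaccessible (being $\omega$-Erd\H{o}s) and $|V_{\alpha+1}| < \eta$, fewer than $\eta$ many such types arise, so $\gamma < \eta$ is achievable and the $\omega$-Erd\H{o}s partition property yields an indiscernible set $I = \{\kappa_n : n < \omega\} \subseteq \eta \setminus (\alpha+1)$. Let $H$ be the Skolem hull of $V_{\alpha+1} \cup I$ in $\mathcal{H}$ and let $j_0 : H \to H$ be the elementary self-embedding obtained by extending the shift $\kappa_n \mapsto \kappa_{n+1}$ via the Skolem terms. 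Then $j_0$ fixes $V_\lambda$, $A$, and $\alpha$ as distinguished parameters, and $\crit(j_0) = \kappa_0 \in (\alpha, \eta)$, so $j_0$ restricts to an elementary self-embedding of the elementary substructure $(V_\lambda \cap H; \mathord{\in}, A \cap H) \prec (V_\lambda; \mathord{\in}, A)$ with the same critical point.

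To finish, one forces to collapse both $(V_\lambda; \mathord{\in}, A)$ and $H$ to be countable and invokes the absoluteness lemma \cite[Lemma 2.6]{BagGitSchGenericVopenka}: the existence of a generic elementary embedding $(V_\lambda; \mathord{\in}, A) \to (V_\lambda; \mathord{\in}, A)$ with critical point in the open interval $(\alpha, \eta)$ corresponds to the ill-foundedness of a canonical tree of finite partial elementary maps, and this ill-foundedness is witnessed by the data supplied by $j_0$ together with the richness of the system of indiscernibles $I$. The main obstacle I anticipate is precisely this bootstrapping step: turning the self-embedding of the proper elementary substructure $H$ into a total generic self-embedding of all of $(V_\lambda; \mathord{\in}, A)$. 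This is the content that goes beyond Silver's classical construction, and it is handled by the absoluteness machinery for countable structures combined with the flexibility provided by the $\omega$-Erd\H{o}s partition property (which supplies enough indiscernibles to realize all of the types arising in a back-and-forth extension).
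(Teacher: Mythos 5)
The paper itself does not prove this lemma: it explicitly defers to Gitman--Schindler and to Wilson \cite[Lemma 2.5]{WilWeaklyRemarkable}, so the only comparison available is with that cited argument, and your overall strategy (extract indiscernibles in $(\alpha,\eta)$ from the partition property, then virtualize) is the intended one. The first half of your sketch is essentially fine: Skolemize, color tuples from $[\eta]^{<\omega}$ by their type over $V_{\alpha+1}$ (at most $2^{|V_{\alpha+1}|}<\eta$ colors since $\eta$ is inaccessible), and obtain an indiscernible set $I=\{\kappa_n:n<\omega\}\subseteq(\alpha,\eta)$ inducing a shift embedding $j_0$ of the hull $H=\mathrm{Hull}(V_{\alpha+1}\cup I)$. (One quibble: with indiscernibility only over $V_{\alpha+1}$ you get $\alpha<\crit(j_0)\le\kappa_0$, not $\crit(j_0)=\kappa_0$, since an ordinal of $H$ in $(\alpha,\kappa_0)$ of the form $t(\vec p,\vec\kappa)$ may move; this is harmless for the stated conclusion.)

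The genuine gap is the step you yourself flag as ``the main obstacle,'' and nothing in your sketch actually addresses it. An elementary self-embedding of a countable elementary substructure $H\prec(V_\lambda;\mathord{\in},A)$ does not in general yield even a generic elementary self-embedding of the full structure, and the absoluteness lemma \cite[Lemma 2.6]{BagGitSchGenericVopenka} cannot help: it transfers the existence of an embedding between two \emph{fixed} countable structures across models, but it cannot enlarge the domain of $j_0$ from $H$ to $V_\lambda$. Your fallback --- that $I$ ``supplies enough indiscernibles to realize all of the types arising in a back-and-forth extension'' --- is precisely the assertion requiring proof, and it is not true on its face: a back-and-forth starting from $j_0$ must absorb arbitrary elements of $V_\lambda$ (of which there are $\beth_\lambda$ many, far more than lie in any hull of $V_{\alpha+1}\cup I$), and the finitely satisfiable types that arise on the range side need not be realized in $(V_\lambda;\mathord{\in},A)$. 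The cited arguments do not attempt to extend $j_0$ at all; rather, they use the characterization of generic embeddability via the closed game $G(\mathcal{M}_0,\mathcal{M}_1)$ --- the same characterization from \cite[Proposition 4.1]{BagGitSchGenericVopenka} that is quoted in Section \ref{section:hereditary} --- applied to $\mathcal{M}_i=(V_\lambda;\mathord{\in},A,\xi,\kappa_i)_{\xi\le\alpha}$ for $i=0,1$, and show that player I cannot have a winning strategy by deriving a contradiction from the well-foundedness of the tree of surviving plays against such a strategy, in the style of Silver's absoluteness argument already invoked in the proof of Lemma \ref{lem:sing-gen-Vop-like-is-lim-of-omega-erdos}. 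Until you supply that step or an equivalent, the proof is incomplete at its only nontrivial point.
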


\begin{lemma}\label{lem:omega-Erdos-or-lim-of-omega-Erdos-is-gen-Vop-like}
 Assume that $\lambda$ is an $\omega$-Erd\H{o}s cardinal or a limit of $\omega$-Erd\H{o}s cardinals. Then $\lambda$ is a generic Vop\v{e}nka-like cardinal.
\end{lemma}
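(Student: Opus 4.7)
The plan is to reduce to a single application of Lemma~\ref{lem:gen-emb-from-omega-erdos}: I take an injective enumeration of $\mathscr{B}$ by $\lambda$, code it as a predicate on some $V_\mu$, and use the critical point $\kappa$ of the resulting generic elementary embedding $j$ together with its image $j(\kappa)$ as indices of two distinct structures in $\mathscr{B}$ between which $j$ restricts to a generic elementary embedding.

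Concretely, let $\mathcal{L}$ be a first-order language with $|\mathcal{L}|<\lambda$ and let $\mathscr{B}\subset H_\lambda$ be a set of $\mathcal{L}$-structures, each of cardinality less than $\lambda$, with $|\mathscr{B}|=\lambda$. Fix an injective enumeration $f=\langle \mathcal{M}_\gamma:\gamma<\lambda\rangle$ of $\mathscr{B}$. Under either hypothesis on $\lambda$ there is an $\omega$-Erd\H{o}s cardinal $\eta\le\lambda$ and a cardinal $\alpha<\eta$ with $\mathcal{L}\in V_\alpha$. Pick an ordinal $\mu\ge\eta$ large enough that $f,\mathcal{L},\lambda\in V_\mu$; for instance $\mu=\lambda+\omega$. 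Encode the triple $(f,\mathcal{L},\lambda)$ as a single predicate $A\subset V_\mu$.

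By Lemma~\ref{lem:gen-emb-from-omega-erdos}, there is a generic elementary embedding $j:(V_\mu;\in,A)\to(V_\mu;\in,A)$ with $\alpha<\kappa:=\crit(j)<\eta$. Since $\alpha<\kappa$, $j$ fixes $V_\alpha$ pointwise, and in particular every symbol of $\mathcal{L}$. Because $\lambda$ is coded in $A$, $j(\lambda)=\lambda$, and since $\kappa<\lambda$ this forces $j(\kappa)<\lambda$. Because $f$ is coded in $A$, $j(f(\kappa))=f(j(\kappa))$. Setting $\mathcal{M}=\mathcal{M}_\kappa$ and $\mathcal{M}'=\mathcal{M}_{j(\kappa)}$, both structures lie in $\mathscr{B}$, and injectivity of $f$ together with $\kappa\ne j(\kappa)$ yields $\mathcal{M}\ne\mathcal{M}'$. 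By the standard absoluteness of the satisfaction relation for set-sized $\mathcal{L}$-structures between $V_\mu$ and $V$, the restriction $j\upharpoonright\mathcal{M}$ is an elementary embedding of $\mathcal{M}$ into $\mathcal{M}'$ as $\mathcal{L}$-structures, living in the same generic extension as $j$.

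The only genuinely delicate point is arranging that $j(\kappa)<\lambda$, so that $\mathcal{M}_{j(\kappa)}$ actually appears in the enumeration; this is handled uniformly in both the inaccessible case $\eta=\lambda$ and the singular-limit case $\eta<\lambda$ by explicitly coding $\lambda$ into the predicate $A$. The remaining ingredients --- choosing $\alpha$ large enough to immobilize $\mathcal{L}$, and transferring elementarity of $j$ on $(V_\mu;\in,A)$ to elementarity of $j\upharpoonright\mathcal{M}$ as $\mathcal{L}$-structures --- are routine bookkeeping once Lemma~\ref{lem:gen-emb-from-omega-erdos} is in hand.
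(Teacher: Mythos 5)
Your proposal is correct and follows essentially the same route as the paper: both reduce the statement to a single application of Lemma~\ref{lem:gen-emb-from-omega-erdos} applied to a predicate coding an injective $\lambda$-enumeration of $\mathscr{B}$, and then restrict the resulting self-embedding to the structure indexed by its critical point. The only cosmetic difference is that the paper works directly with $(V_\lambda;\mathord{\in},\vec{\mathcal{M}})$ (noting $H_\lambda = V_\lambda$ since $\lambda$ is a strong limit under either hypothesis), which makes $j(\kappa)<\lambda$ automatic, whereas you pass to $V_{\lambda+\omega}$ and secure $j(\lambda)=\lambda$ by coding $\lambda$ into the predicate.
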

\begin{proof}
 Let $\vec{\mathcal{M}}$ be a $\lambda$-sequence of structures for the same first-order language $\mathcal{L}$ such that $|\mathcal{L}| < \lambda$ and every structure on the sequence is in $H_\lambda$. We will show that there is a generic elementary embedding between two structures on the sequence. Let $\eta$ be the least $\omega$-Erd\H{o}s cardinal greater than $|\mathcal{L}|$. Our hypothesis on $\lambda$ implies that $\eta \le \lambda$ and also that $H_\lambda = V_\lambda$. By Lemma \ref{lem:gen-emb-from-omega-erdos} there is a generic elementary embedding 
 \[j : (V_\lambda;\mathord{\in}, \vec{\mathcal{M}}) \to (V_\lambda; \mathord{\in}, \vec{\mathcal{M}})\]
 such that $|\mathcal{L}| < \crit(j) < \lambda$.  Let $\kappa = \crit(j)$. Because $j$ preserves $\vec{\mathcal{M}}$ we have $j(\vec{\mathcal{M}}(\kappa)) = \vec{\mathcal{M}}(j(\kappa))$, and because $\crit(j) > |\mathcal{L}|$ it follows that the restriction $j \restriction \vec{\mathcal{M}}(\kappa)$ is a generic elementary embedding of the structure $\vec{\mathcal{M}}(\kappa)$ into $\vec{\mathcal{M}}(j(\kappa))$.
\end{proof}

A weaker version of the following lemma with ``$\kappa$ is a generic Vop\v{e}nka cardinal'' replaced by ``$V_\kappa$ satisfies the generic Vop\v{e}nka principle for classes definable from parameters'' follows from Bagaria, Gitman, and Schindler \cite[Proposition 3.10 and Theorem 5.2]{BagGitSchGenericVopenka}. A non-virtual version of the following lemma with ``generic Vop\v{e}nka cardinal'' replaced by ``Vop\v{e}nka cardinal'' and ``virtual rank-into-rank cardinal'' replaced by ``$\omega$-huge cardinal'' follows from results of Solovay, Reinhardt, and Kanamori \cite[Section 8]{SolReiKanStrongAxioms}.

\begin{lemma}\label{lem:v-rank-into-rank-implies-gen-vop}
 Every virtual rank-into-rank cardinal is a generic Vop\v{e}nka cardinal.
\end{lemma}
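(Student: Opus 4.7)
The plan is to mimic the proof of Lemma \ref{lem:omega-Erdos-or-lim-of-omega-Erdos-is-gen-Vop-like}, replacing the appeal to Lemma \ref{lem:gen-emb-from-omega-erdos} with an analogous statement derived from the virtual rank-into-rank hypothesis.

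First I would verify that a virtual rank-into-rank cardinal $\kappa$ is inaccessible in $V$. Given a witnessing generic elementary embedding $j_0 : V_\lambda \to V_\lambda$ with $\crit(j_0) = \kappa$ in some generic extension $V[G]$, the restriction $j_0 \restriction V_\kappa$ is the identity, so every element of $V_\kappa$ is fixed. The usual proof that the critical point of a (non-virtual) rank-into-rank embedding is inaccessible goes through verbatim: if $\kappa$ were singular, a cofinal function $f : \alpha \to \kappa$ with $\alpha < \kappa$ would have all its ordered pairs in $V_\kappa$ and hence be fixed by $j_0$, forcing $\ran(f)$ to be cofinal in $j_0(\kappa) > \kappa$ by elementarity, a contradiction; the strong limit property is verified similarly.

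Second, given a set $\mathscr{B}$ of $\mathcal{L}$-structures in $H_\kappa$ with $|\mathcal{L}| < \kappa$ and $|\mathscr{B}| = \kappa$, I would enumerate $\mathscr{B}$ without repetition as $\vec{\mathcal{M}} = \langle \mathcal{M}_\xi : \xi < \kappa \rangle$. Following Lemma \ref{lem:omega-Erdos-or-lim-of-omega-Erdos-is-gen-Vop-like}, the goal is to produce a generic elementary embedding
\[
 j : (V_\lambda;\mathord{\in}, \vec{\mathcal{M}}) \to (V_\lambda; \mathord{\in}, \vec{\mathcal{M}})
\]
with $|\mathcal{L}| < \crit(j) < \kappa$. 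Granting such $j$, let $\mu = \crit(j)$; preservation of $\vec{\mathcal{M}}$ gives $j(\mathcal{M}_\mu) = \mathcal{M}_{j(\mu)}$, and $\crit(j) > |\mathcal{L}|$ ensures that $j \restriction \mathcal{M}_\mu$ is a generic elementary embedding of $\mathcal{M}_\mu$ into $\mathcal{M}_{j(\mu)}$. Since the enumeration $\vec{\mathcal{M}}$ has no repetitions, these are distinct members of $\mathscr{B}$, as required.

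The main obstacle is producing the predicate-preserving embedding $j$. The raw virtual rank-into-rank embedding $j_0$ fixes $V_\kappa$ but does not in general preserve $\vec{\mathcal{M}}$, which lies in $V_{\kappa+1}$. The required upgrade is the virtual analog of Lemma \ref{lem:gen-emb-from-omega-erdos}: for every $A \subset V_\lambda$ in $V$ and every $\alpha < \kappa$, there is a generic elementary embedding $(V_\lambda; \mathord{\in}, A) \to (V_\lambda; \mathord{\in}, A)$ with critical point strictly between $\alpha$ and $\kappa$. Following the method implicit in Wilson \cite[Lemma 2.5]{WilWeaklyRemarkable}, the plan is to obtain this by a reflection argument inside $V[G]$: pass to a sufficiently rich elementary substructure containing $j_0$, $A$, and the parameters $\alpha$, $\lambda$, $\kappa$, take its transitive collapse, and use absoluteness of well-foundedness for the tree of attempts to construct a predicate-preserving elementary self-embedding in order to extract a generic embedding with critical point in the specified range.
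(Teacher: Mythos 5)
Your reduction has a genuine gap, and in fact the intermediate claim you rely on --- the ``virtual analog of Lemma \ref{lem:gen-emb-from-omega-erdos}'' asserting that for every $A \subset V_\lambda$ there is a generic elementary self-embedding of $(V_\lambda;\mathord{\in},A)$ with critical point strictly between $\alpha$ and $\kappa$ --- is not merely unproven but false for virtual rank-into-rank cardinals in general. If $\kappa$ had this property, then applying it to a coloring $f : [\kappa]^{\mathord{<}\omega} \to 2$ (coded as a predicate $A$, or for that matter as a $\kappa$-sequence of structures $(\xi;\mathord{\in}, f\restriction[\xi]^{\mathord{<}\omega})$) would produce a generic self-embedding $j$ whose critical sequence stays below $\kappa$ (since $j$ preserves $\dom(f) = [\kappa]^{\mathord{<}\omega}$) and is homogeneous for $f$ by the Silver argument used in the proof of Lemma \ref{lem:sing-gen-Vop-like-is-lim-of-omega-erdos}; by Silver's absoluteness argument $V$ would then contain a homogeneous set of order type $\omega$, so $\kappa \to (\omega)^{\mathord{<}\omega}_2$ and there would be an $\omega$-Erd\H{o}s cardinal $\le \kappa$. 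But by Lemma \ref{lem:omega-Erdos-is-lim-of-v-rank-into-rank} the least virtual rank-into-rank cardinal lies strictly below the least $\omega$-Erd\H{o}s cardinal, so it carries a coloring with no homogeneous set of order type $\omega$ and cannot satisfy your claim. The underlying reason your sketch cannot be repaired is that in Lemma \ref{lem:gen-emb-from-omega-erdos} the branch through the ``tree of attempts'' comes from indiscernibles supplied by the partition property applied to colorings \emph{defined from} $A$; the virtual rank-into-rank hypothesis supplies only a single generic embedding $j_0$ of the pure structure $(V_\lambda;\mathord{\in})$, and $j_0$ is not elementary for $(V_\lambda;\mathord{\in},A)$, so a hull-and-collapse argument around $j_0$ and $A$ has no witness to reflect.

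The correct proof takes a different route that avoids predicate preservation entirely: since $\vec{\mathcal{M}}$ is an \emph{element} of $V_\lambda$ (being a $\kappa$-sequence of members of $H_\kappa = V_\kappa$), one applies $j$ to it twice. Because $j(\vec{\mathcal{M}})\restriction\kappa = \vec{\mathcal{M}}$ (as $\crit(j)=\kappa$ fixes each entry), elementarity gives $j(j(\vec{\mathcal{M}}))(\kappa) = j(\vec{\mathcal{M}})(\kappa)$, and so $j \restriction j(\vec{\mathcal{M}})(\kappa)$ is a generic elementary embedding between the structures at the distinct indices $\kappa$ and $j(\kappa)$ on the sequence $j(j(\vec{\mathcal{M}}))$. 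Pulling the resulting existential statement back through $j$ twice yields a generic elementary embedding between two distinct structures on $\vec{\mathcal{M}}$ itself. Note that here the critical point of the embedding obtained between members of $\mathscr{B}$ is \emph{not} controlled to lie below $\kappa$; insisting on that control is exactly what pushes your argument up to $\omega$-Erd\H{o}s strength. Your first step (inaccessibility of $\kappa$) and your final step (extracting the conclusion from a predicate-preserving embedding, were one available) are fine; the failure is concentrated in the production of that embedding.
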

\begin{proof}
 Let $\kappa$ be a virtual rank-into-rank cardinal. Then there is an ordinal $\lambda > \kappa$ and a generic elementary embedding $j : (V_\lambda;\mathord{\in}) \to (V_\lambda; \mathord{\in})$ such that $\crit(j) = \kappa$. Clearly $\kappa$ is an inaccessible cardinal.  Let $\vec{\mathcal{M}}$ be a $\kappa$-sequence of structures for the same first-order language $\mathcal{L}$ such that $|\mathcal{L}| < \kappa$ and every structure on the sequence is in $H_\kappa$.
 
 Because $\crit(j) > |\mathcal{L}|$ it follows that $j(\vec{\mathcal{M}})$ is a $j(\kappa)$-sequence of $\mathcal{L}$-structures, $j(j(\vec{\mathcal{M}}))$ is a $j(j(\kappa))$-sequence of $\mathcal{L}$-structures, and we have a generic elementary embedding
 \[ j \restriction j(\vec{\mathcal{M}})(\kappa) : j(\vec{\mathcal{M}})(\kappa) \to j(j(\vec{\mathcal{M}})(\kappa)) = j(j(\vec{\mathcal{M}}))(j(\kappa)).\]
 We have $j(\vec{\mathcal{M}}) \restriction \kappa = \vec{\mathcal{M}}$
 because $\crit(j) = \kappa$ and each structure on $\vec{\mathcal{M}}$ is in $H_\kappa$, so by the elementarity of $j$ it follows that 
 \[j(j(\vec{\mathcal{M}})) \restriction j(\kappa) = j(\vec{\mathcal{M}})\]
 and in particular we have
 \[j(j(\vec{\mathcal{M}}))(\kappa) = j(\vec{\mathcal{M}})(\kappa).\]
 Therefore $j \restriction j(\vec{\mathcal{M}})(\kappa)$ is a generic elementary embedding between two structures on the sequence $j(j(\vec{\mathcal{M}}))$, namely $j(j(\vec{\mathcal{M}}))(\kappa)$ and $j(j(\vec{\mathcal{M}}))(j(\kappa))$. Applying the elementarity of $j$ twice, it follows that there is a generic elementary embedding between two structures on the sequence $\vec{\mathcal{M}}$.
\end{proof}

Because the statement ``$\kappa$ is a generic Vop\v{e}nka cardinal'' is $\Pi^1_1$ over $(V_\kappa, V_{\kappa+1}; \mathord{\in})$ and virtual rank-into-rank cardinals are $\Pi^1_1$-indescribable, Lemma \ref{lem:v-rank-into-rank-implies-gen-vop} furthermore implies that every virtual rank-into-rank cardinal is a limit of generic Vop\v{e}nka cardinals.

Proposition \ref{prop:singular-equivalence} follows from Lemmas \ref{lem:sing-gen-Vop-like-is-lim-of-omega-erdos} and \ref{lem:omega-Erdos-or-lim-of-omega-Erdos-is-gen-Vop-like}. Proposition \ref{prop:omega-erdos-is-gen-vop-lim-of-gen-vop} follows from Lemmas \ref{lem:omega-Erdos-is-lim-of-v-rank-into-rank}, \ref{lem:omega-Erdos-or-lim-of-omega-Erdos-is-gen-Vop-like}, \ref{lem:v-rank-into-rank-implies-gen-vop}, and the fact that every $\omega$-Erd\H{o}s cardinal is regular.

\section{Generic Vop\v{e}nka cardinals and hereditary sets of structures} \label{section:hereditary}
 
In this section we will prove Proposition \ref{prop:hereditary}, which characterizes the generic Vop\v{e}nka cardinals in terms of the definability and number of generically hereditary sets of structures.  We work in ZFC for the duration of the proof.
 
 \begin{proof}[Proof of Proposition \ref{prop:hereditary}]\ \\

 \eqref{item:hereditary-gen-vop} implies \eqref{item:hereditary-complexity}: Let $\kappa$ be a generic Vop\v{e}nka cardinal, let $\mathcal{L}$ be a first-order language of cardinality less than $\kappa$, and let $\mathscr{A} \subset H_\kappa$ be a generically hereditary set of $\mathcal{L}$-structures. We will show that $\mathscr{A}$ is ${\bfSigma}_1$-definable over $H_\kappa$.
 
 Let $\mathscr{B}$ be the set of all $\mathcal{L}$-structures in $H_\kappa \setminus \mathscr{A}$. By the definition of ``generically hereditary'' the set $\mathscr{A}$ is downward closed in $H_\kappa$ with respect to generic elementary embeddability, so the set $\mathscr{B}$ is upward closed in $H_\kappa$ with respect to generic elementary embeddability. We claim that $\mathscr{B}$ is generated as the upward closure of some subset $\mathscr{B}_0 \subset \mathscr{B}$ such that $|\mathscr{B}_0| < \kappa$.
 
 Assume toward a contradiction that $\mathscr{B}$ is not generated in this way by any such small subset $\mathscr{B}_0$.  Then by transfinite recursion using the axiom of choice we may obtain a $\kappa$-sequence $\vec{\mathcal{M}}$ of distinct $\mathcal{L}$-structures in $\mathscr{B}$ such that whenever $\alpha < \alpha' < \kappa$ there is no generic elementary embedding of $\vec{\mathcal{M}}(\alpha)$ into $\vec{\mathcal{M}}(\alpha')$. By replacing each structure $\vec{\mathcal{M}}(\alpha)$  with a structure coding the pair of structures $(\vec{\mathcal{M}}(\alpha),(\alpha ; \mathord{\in}))$ if necessary, we may assume there is also no generic elementary embedding in the reverse direction. The existence of such a sequence of structures contradicts the generic Vop\v{e}nka property of $\kappa$, as desired.
 
 Now fix a small subset $\mathscr{B}_0$ generating $\mathscr{B}$ as above. Because $\mathscr{B}_0 \subset H_\kappa$, $|\mathscr{B}_0| < \kappa$, and $\kappa$ is regular we have $\mathscr{B}_0 \in H_\kappa$, so we can use it as a parameter in a definition over $H_\kappa$. For every $\mathcal{M} \in H_\kappa$ we have $\mathcal{M} \in \mathscr{A}$ if and only if $\mathcal{M}$ is an $\mathcal{L}$-structure and for every $\mathcal{M}_0 \in \mathscr{B}_0$ there is no generic elementary embedding of $\mathcal{M}_0$ into $\mathcal{M}$. To show that $\mathscr{A}$ is ${\bfSigma}_1^{H_\kappa}$ it will therefore suffice to express the nonexistence of a generic elementary embedding of $\mathcal{M}_0$ into $\mathcal{M}$ by a $\Sigma_1$ formula over $H_\kappa$, or equivalently over $V$ because $\kappa$ is inaccessible.\footnote{We can also express the nonexistence of a generic elementary embedding between two structures by a $\Pi_1$ formula, showing that $\mathscr{A}$ is ${\bfPi}_1^{H_\kappa}$, but this is not relevant to our main result on $\aleph_1$-Suslin sets.}
 
 By Bagaria, Gitman, and Schindler \cite[Proposition 4.1]{BagGitSchGenericVopenka}, the existence of a generic elementary embedding of $\mathcal{M}_0$ into $\mathcal{M}$ is equivalent to the existence of a winning strategy for player II in the game $G(\mathcal{M}_0,\mathcal{M})$ defined as follows. In round $n$ of the game, player I chooses an element $x_n \in \mathcal{M}_0$ and then player II chooses an element $y_n \in \mathcal{M}$. Player II survives round $n$ if and only if the type of $(x_0,\ldots,x_n)$ in $\mathcal{M}_0$ equals the type of $(y_0,\ldots,y_n)$ in $\mathcal{M}$; otherwise the position $(x_0,y_0,\ldots,x_n,y_n)$ is considered to be an immediate loss for player II. To win, player II must survive all $\omega$ rounds.
 
 This game is determined by the Gale--Stewart theorem, so the nonexistence of a generic elementary embedding of $\mathcal{M}_0$ into $\mathcal{M}$ is equivalent to the existence of a winning strategy for player I. Because player I's payoff set is open, this condition is furthermore equivalent to the existence of a nonempty subtree $S$ of the game tree with all of the following properties:
 
 \begin{itemize}
  \item For every position $\sigma \in S$ of even length there is some $x\in \mathcal{M}_0$ such that $\sigma^\frown x \in S$.
  \item For every position $\sigma \in S$ of odd length and every $y\in \mathcal{M}$, either $\sigma^\frown y$ is an immediate loss for player II or $\sigma^\frown y \in S$.
  \item $S$ is wellfounded.
 \end{itemize}
 Because wellfoundedness is witnessed by rank functions, this condition for the nonexistence of a generic elementary embedding can be expressed by a $\Sigma_1$ formula.
 
 \eqref{item:hereditary-complexity} implies \eqref{item:hereditary-fewer-than-2-H-kappa}: Let $\kappa$ be an uncountable cardinal and assume that every generically hereditary subset of $H_\kappa$ is ${\bfSigma}_1^{H_\kappa}$. Because the number of possible parameters for $\Sigma_1$ definitions over $H_\kappa$ is $|H_\kappa| = 2^{\mathord{<}\kappa}$, it follows that the number of generically hereditary subsets of $H_\kappa$ is at most $2^{\mathord{<}\kappa}$.  We will show that this weak upper bound implies the desired strict upper bound, namely that the number of generically hereditary subsets of $H_\kappa$ is less than $2^\kappa$. This implication is trivial if $2^{\mathord{<}\kappa} < 2^\kappa$, so we assume toward a contradiction that $2^{\mathord{<}\kappa} = 2^\kappa$.  
 
 We have $\operatorname{cf}(2^\kappa) > \kappa$ by K\"{o}nig's theorem, so $2^{\mathord{<}\kappa} = 2^\kappa$ implies $2^\alpha =  2^\kappa$ for some cardinal $\alpha < \kappa$. Our upper bound on the number of generically hereditary subsets of $H_\kappa$ therefore becomes $2^\alpha$. We will obtain a contradiction by showing that (in general) for every cardinal $\alpha < \kappa$ there are at least $2^{2^{\alpha}}$ generically hereditary subsets of $H_\kappa$.  Letting $\mathcal{L}$ be a first-order language with $\alpha$ nullary predicate symbols, the number of equivalence classes of $\mathcal{L}$-structures in $H_\kappa$ with respect to elementary equivalence is $2^\alpha$. For every set of such equivalence classes, its union is generically hereditary, so the number of generically hereditary sets of $\mathcal{L}$-structures is at least $2^{2^\alpha}$, giving the desired contradiction.
 
 \eqref{item:hereditary-fewer-than-2-H-kappa} implies \eqref{item:hereditary-gen-vop}: Let $\kappa$ be an uncountable cardinal and assume that statement \ref{item:hereditary-gen-vop} fails, meaning that $\kappa$ is not a generic Vop\v{e}nka cardinal.  There are two possibilities: either $\kappa$ is not a generic Vop\v{e}nka-like cardinal or $\kappa$ is a singular generic Vop\v{e}nka-like cardinal, and in each case we will show that the number of generically hereditary subsets of $H_\kappa$ is at least $2^\kappa$.
 
 First we consider the case in which $\kappa$ is not a generic Vop\v{e}nka-like cardinal. Then there is a set of structures $\mathscr{B} \subset H_\kappa$ for the same first-order language such that $|\mathscr{B}| = \kappa$ and there is no generic elementary embedding between two distinct structures in $\mathscr{B}$. For every subset $\mathscr{A} \subset \mathscr{B}$ we let $\mathscr{A}^*$ be the downward closure of $\mathscr{A}$ in $H_\kappa$, meaning the set of all structures in $H_\kappa$ admitting a generic elementary embedding into some structure in $\mathscr{A}$.  Note that $\mathscr{A}^*$ is generically hereditary by definition.
 
 Because $|\powerset(\mathscr{B})| = 2^\kappa$ it will suffice to show that the function $\mathscr{A} \mapsto \mathscr{A}^*$ is injective. Let $\mathscr{A}_0$ and $\mathscr{A}_1$ be distinct subsets of $\mathscr{B}$. We may assume without loss of generality that there is a structure $\mathcal{M} \in \mathscr{A}_0 \setminus \mathscr{A}_1$.  Then clearly $\mathcal{M} \in \mathscr{A}_0^*$. On the other hand, $\mathcal{M}$ cannot be in $\mathscr{A}_1^*$: it is not in $\mathscr{A}_1$ and it cannot be generically elementarily embedded into any element of $\mathscr{A}_1$ because no element of $\mathscr{B}$ can be generically elementarily embedded into any other element of $\mathscr{B}$. Therefore we have $\mathscr{A}_0^* \ne \mathscr{A}_1^*$ as~desired.
 
 Next we consider the case in which $\kappa$ is a singular generic Vop\v{e}nka-like cardinal. Then $\kappa$ is a singular strong limit cardinal by Lemma \ref{lem:strong-limit}, so $2^\kappa = \kappa^{\cf(\kappa)}$ and it will suffice to show that there are at least $\kappa^{\cf(\kappa)}$ generically hereditary subsets of $H_\kappa$.
 
 Let $\mathcal{L}$ be the language with a binary predicate symbol $\in$ for set membership and a nullary predicate symbol $R_i$ for every ordinal $i < \cf(\kappa)$. For all ordinals $i < \cf(\kappa)$ and $\beta < \kappa$ we define the $\mathcal{L}$-structure
 \[\mathcal{M}_{i,\beta} = (\beta ; \mathord{\in}, R^{\mathcal{M}_{i,\beta}}_{i'})_{i' < \cf(\kappa)}\]
 where $R^{\mathcal{M}_{i,\beta}}_i = \top$ and $R^{\mathcal{M}_{i,\beta}}_{i'} = \bot$ for all $i' \ne i$. For every function $f \in \kappa^{\cf(\kappa)}$ we may define a corresponding set of $\mathcal{L}$-structures
 \[\mathscr{A}_f = \{\mathcal{M}_{i,f(i)} : i <\cf(\kappa)\}.\]
 Let $\mathscr{A}_f^*$ be the downward closure of $A_f$ in $H_\kappa$ with respect to generic elementary embeddability. Note that for all ordinals $i,i' < \cf(\kappa)$ and $\beta, \beta' < \kappa$, there is a generic elementary embedding from $\mathcal{M}_{i,\beta}$ into $\mathcal{M}_{i',\beta'}$ if and only if $i = i'$ and $\beta \le \beta'$. Using this fact it is easy to see that for any two distinct functions $f_0,f_1 \in \kappa^{\cf(\kappa)}$ we have $\mathscr{A}_{f_0}^* \ne \mathscr{A}_{f_1}^*$, so the number of generically hereditary subsets of $H_\kappa$ is at least $\kappa^{\cf(\kappa)}$ as desired. The proof of Proposition \ref{prop:hereditary} is complete.
\end{proof}

\section{Proof of Theorem \ref{thm:main-equicon}} \label{section:proof-of-thm}
 
In this section we will prove our main equiconsistency result relating generic Vop\v{e}nka cardinals in models of ZFC to the complexity and number of $\aleph_1$-Suslin sets in models of ZF. First we will prove two lemmas showing that the generic Vop\v{e}nka property is absolute to inner models and small forcing extensions respectively.

\begin{lemma}[ZFC] \label{lem:downward-absolute}
 Let $\kappa$ be a generic Vop\v{e}nka cardinal and let $W$ be an inner model of ZFC.  Then $\kappa$ is a generic Vop\v{e}nka cardinal in $W$.
\end{lemma}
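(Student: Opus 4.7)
The strategy is to verify both clauses of the definition of generic Vop\v{e}nka cardinal---inaccessibility and the generic Vop\v{e}nka-like property---for $\kappa$ in $W$. For inaccessibility, I would first note that regularity is downward absolute: any function $f \in W$ witnessing $\cf^W(\kappa) < \kappa$ would also witness $\cf^V(\kappa) < \kappa$, contradicting the regularity of $\kappa$ in $V$. The strong limit property transfers down because $(2^\alpha)^W \leq (2^\alpha)^V < \kappa$ for all cardinals $\alpha < \kappa$, using Lemma \ref{lem:strong-limit} applied in $V$.

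For the generic Vop\v{e}nka-like property in $W$, let $\mathcal{L}, \mathscr{B} \in W$ satisfy the hypotheses in $W$. The first step is to transfer these hypotheses to $V$: a $W$-bijection $\kappa \to \mathscr{B}$ is still an injection in $V$, so $|\mathscr{B}|^V \geq \kappa$, and combined with $|\mathscr{B}|^V \leq |\mathscr{B}|^W = \kappa$ this gives $|\mathscr{B}|^V = \kappa$; and $|\mathcal{L}|^V \leq |\mathcal{L}|^W < \kappa$ and $|\mathcal{M}|^V \leq |\mathcal{M}|^W < \kappa$ for each $\mathcal{M} \in \mathscr{B}$. Applying the generic Vop\v{e}nka property of $\kappa$ in $V$ produces distinct $\mathcal{M}, \mathcal{M}' \in \mathscr{B}$ such that a generic elementary embedding $j : \mathcal{M} \to \mathcal{M}'$ exists in some forcing extension of $V$.

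The main obstacle is then to show that a generic elementary embedding of $\mathcal{M}$ into $\mathcal{M}'$ also exists in a forcing extension of $W$. For this I would use the game-theoretic characterization from Bagaria--Gitman--Schindler employed in the proof of Proposition \ref{prop:hereditary}: existence of a generic elementary embedding of $\mathcal{M}$ into $\mathcal{M}'$ is equivalent to Player II having a winning strategy in the embedding game $G(\mathcal{M},\mathcal{M}')$, and by Gale--Stewart determinacy, nonexistence is witnessed by a wellfounded subtree $S$ of the game tree with the closure properties listed in that proof, encoding a winning strategy for Player I. Suppose toward a contradiction that no generic embedding of $\mathcal{M}$ into $\mathcal{M}'$ exists over $W$; then $W$ contains such a wellfounded witnessing subtree $S$. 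Since wellfoundedness of a set-sized relation is absolute between transitive classes, $S$ remains wellfounded in $V$ and still witnesses that Player I has a winning strategy in $V$, contradicting the existence of the embedding in $V$. Hence a generic elementary embedding of $\mathcal{M}$ into $\mathcal{M}'$ must exist in a forcing extension of $W$, completing the verification of the generic Vop\v{e}nka property of $\kappa$ in $W$.
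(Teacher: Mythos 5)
Your proof is correct and follows the same overall architecture as the paper's: transfer the hypotheses on $\mathcal{L}$ and $\mathscr{B}$ up to $V$, apply the generic Vop\v{e}nka property there, and then push the resulting generic elementary embedding back down to $W$. The difference lies entirely in how that last step is handled. The paper simply cites the absoluteness of elementary embeddability of countable structures (Bagaria--Gitman--Schindler, Lemma 2.6): since an embedding of $\mathcal{M}$ into $\mathcal{M}'$ exists in $V[G]$, one exists in $W[H]$ for $H \subset \Col(\omega,\mathcal{M})$ a $V[G]$-generic filter, so $W$ sees a generic elementary embedding. You instead argue in contrapositive via the embedding game: if no generic embedding existed over $W$, then by the game characterization, Gale--Stewart determinacy, and the open-game analysis, $W$ would contain a wellfounded quasi-strategy tree $S$ for player I; since the type relations on $\mathcal{M}$ and $\mathcal{M}'$ and the wellfoundedness of $S$ (witnessed by a rank function in $W$) are absolute to $V$, player I would win in $V$, contradicting the existence of the embedding over $V$. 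This is sound --- it is in effect a self-contained proof of the one direction of the absoluteness lemma that is actually needed, and it avoids passing through the intermediate extension $W[H]$; what it costs is invoking the game machinery (which the paper only deploys later, in the proof of Proposition \ref{prop:hereditary}) at a point where a one-line citation suffices. Your explicit verification of inaccessibility in $W$ is also fine (the paper dismisses it as clear); note only that the inference from $|\powerset(\alpha)^W|^V < \kappa$ to $(2^\alpha)^W < \kappa$ uses that $\kappa$ is a cardinal in $V$, since cardinalities computed in $W$ can exceed those computed in $V$.
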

\begin{proof}
 Clearly the inaccessibility of $\kappa$ is downward absolute to $W$. Now in $W$ let $\mathcal{L}$ be a first-order language of cardinality less than $\kappa$ and let $\mathscr{B}$ be a set of $\mathcal{L}$-structures such that $|\mathscr{B}| = \kappa$ and every structure in $\mathscr{B}$ has cardinality less than $\kappa$. Then these properties of $\mathscr{B}$ hold in $V$ as well, so by the generic Vop\v{e}nka property of $\kappa$ there are distinct structures $\mathcal{M}, \mathcal{M}' \in \mathscr{B}$ such that an elementary embedding of $\mathcal{M}$ into $\mathcal{M}'$ exists in some generic extension $V[G]$. By the absoluteness of elementary embeddability of countable structures, an elementary embedding of $\mathcal{M}$ into $\mathcal{M}'$ exists in $W[H]$ where $H \subset \Col(\omega,\mathcal{M})$ is a $V[G]$-generic filter, so the statement ``there is a generic elementary embedding of $\mathcal{M}$ into $\mathcal{M}'$'' holds in $W$.
\end{proof}

To show that the generic Vop\v{e}nka property is preserved by small forcing it is convenient to use a result of 
Gitman and Hamkins \cite[Theorem 7]{GitHamOrdNotDelta2Mahlo}, which says that the generic Vop\v{e}nka principle is equivalent in GBC to the statement ``for every class $A$ there is a proper class of weakly virtually $A$-extendible cardinals'' where a cardinal $\alpha$ is called \emph{weakly virtually $A$-extendible} if for every ordinal $\lambda > \alpha$ there is an ordinal $\theta$ and a generic elementary embedding 
\[j : (V_\lambda;\mathord{\in}, A \cap V_\lambda) \to (V_\theta;\mathord{\in}, A \cap V_\theta)\]
such that $\crit(j) = \alpha$.
 
\begin{lemma}[ZFC] \label{lem:small-forcing}
 Let $\kappa$ be a generic Vop\v{e}nka cardinal and let $V[G]$ be a generic extension of $V$ by a poset $\mathbb{P} \in V_\kappa$. Then $\kappa$ is a generic Vop\v{e}nka cardinal in $V[G]$.
\end{lemma}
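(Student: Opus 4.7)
The plan is to use the Gitman--Hamkins characterization cited just before the lemma: $\kappa$ is a generic Vop\v{e}nka cardinal iff $(V_\kappa, V_{\kappa+1}; \in)$ models GBC plus the generic Vop\v{e}nka principle, which by their Theorem 7 is equivalent to the assertion that for every $A \subseteq V_\kappa$ there are cofinally many $\alpha < \kappa$ that are weakly virtually $A$-extendible. Note first that $\kappa$ remains inaccessible in $V[G]$ and that $V_\kappa^{V[G]} = V_\kappa[G]$, so it suffices to show in $V[G]$ that every $A \subseteq V_\kappa^{V[G]}$ admits cofinally many weakly virtually $A$-extendible cardinals below $\kappa$.

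Fix such an $A$ in $V[G]$ and choose a $\mathbb{P}$-nice name $\dot A \in V$ with $\dot A \subseteq V_\kappa$ and $\dot A_G = A$; such a name exists because $\mathbb{P} \in V_\kappa$ and every element of $V_\kappa[G]$ has a nice name in $V_\kappa$. Apply the Gitman--Hamkins characterization in $V$ to get cofinally many $\alpha < \kappa$ that are weakly virtually $\dot A$-extendible in $V$, and restrict attention to those with $\alpha > \rank(\mathbb{P})$. I claim every such $\alpha$ is weakly virtually $A$-extendible in $V[G]$. Fix $\lambda$ with $\alpha < \lambda < \kappa$. In $V$ there is an ordinal $\theta$ and a generic elementary embedding
\[ j : (V_\lambda; \in, \dot A \cap V_\lambda) \to (V_\theta; \in, \dot A \cap V_\theta) \]
with $\crit(j) = \alpha$, living in some $V$-generic extension. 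By the game-theoretic characterization of generic embedding existence used in the proof of Proposition \ref{prop:hereditary} (Bagaria--Gitman--Schindler, Proposition 4.1), existence of such a $j$ is equivalent to a tree-with-rank-function condition on structures in $V$, which is upward absolute, so such a $j$ also exists in some $V[G]$-generic extension $V[G][H]$.

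The main technical step is a standard small-forcing lifting. Because $\mathbb{P} \in V_\alpha$ and $\crit(j) = \alpha$, the embedding $j$ fixes $\mathbb{P}$ pointwise, so we may define $\hat\jmath : V_\lambda^{V[G]} \to V_\theta^{V[G]}$ in $V[G][H]$ by $\hat\jmath(\tau_G) = j(\tau)_G$ for $\mathbb{P}$-names $\tau \in V_\lambda$, using the identifications $V_\lambda^{V[G]} = V_\lambda[G]$ and $V_\theta^{V[G]} = V_\theta[G]$. Elementarity of $\hat\jmath$ is verified by applying elementarity of $j$ to the definable forcing relation: $V_\lambda[G] \models \phi(\tau_G)$ iff some $p \in G$ forces $\phi(\tau)$ over $V_\lambda$ iff the same $p$ forces $\phi(j(\tau))$ over $V_\theta$ iff $V_\theta[G] \models \phi(j(\tau)_G)$. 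Preservation of the predicate $A$ follows from elementarity applied to $\dot A \cap V_\lambda$: $\tau_G \in A$ iff some $p \in G$ has $(\tau, p) \in \dot A \cap V_\lambda$ iff $(j(\tau), p) \in \dot A \cap V_\theta$ iff $j(\tau)_G \in A$, since $j$ fixes $p$ and preserves ordered-pair formation. Finally, $\crit(\hat\jmath) = \alpha$ because $\hat\jmath$ agrees with $j$ on $\alpha$. The main obstacle is really just bookkeeping: ensuring the absoluteness transfers generic-embedding existence from $V$ to a generic extension of $V[G]$, and checking that the lift preserves the new predicate $A$ defined in $V[G]$ rather than the old predicate $\dot A$ in $V$; both are handled uniformly by the fact that $\crit(j) > \rank(\mathbb{P})$.
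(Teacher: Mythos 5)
Your proof is correct and follows essentially the same route as the paper: it invokes the Gitman--Hamkins characterization of the generic Vop\v{e}nka principle via weakly virtually $A$-extendible cardinals in $(V_\kappa,V_{\kappa+1};\in)$, picks a name $\dot A$ and a weakly virtually $\dot A$-extendible $\alpha$ above $\rank(\mathbb{P})$, and lifts the generic embedding $j$ to $\jhat(\tau_G)=j(\tau)_G$ exactly as in the paper. The extra details you supply (the absoluteness transfer of the generic embedding to an extension of $V[G]$, and the forcing-relation verification of elementarity of the lift) are points the paper leaves implicit, and they are handled correctly.
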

\begin{proof}
 The structures $(V_{\kappa}, V_{\kappa+1}; \mathord{\in})$ and $(V_{\kappa}[G], V_{\kappa+1}[G]; \mathord{\in})$ satisfy GBC because $\kappa$ is inaccessible in $V$ and remains inaccessible in $V[G]$, so we may apply the aforementioned theorem of Gitman and Hamkins in these structures. Our hypothesis therefore implies that for every set $A \in V_{\kappa+1}$, $\kappa$ is a limit of cardinals that are weakly $A$-extendible in $(V_{\kappa}, V_{\kappa+1}; \mathord{\in})$, and our desired conclusion will follow if we can show that for every set $A \in V_{\kappa+1}[G]$, $\kappa$ is a limit of cardinals that are weakly $A$-extendible in $(V_{\kappa}[G], V_{\kappa+1}[G]; \mathord{\in})$.
 
 Let $A \in V_{\kappa+1}[G]$ and take a $\mathbb{P}$-name $\dot{A} \in V_{\kappa+1}$ such that $\dot{A}_G = A$. Let $\alpha$ be a cardinal such that $\rank(\mathbb{P}) < \alpha < \kappa$ and $\alpha$ is weakly virtually $\dot{A}$-extendible in $(V_{\kappa}, V_{\kappa+1}; \mathord{\in})$. It will suffice to show that $\alpha$ is weakly virtually $A$-extendible in $(V_{\kappa}[G], V_{\kappa+1}[G]; \mathord{\in})$.
 
 Let $\lambda$ be an ordinal such that $\alpha < \lambda < \kappa$. Increasing $\lambda$ if necessary, we may assume without loss of generality that it is a limit ordinal.  By our assumption on $\alpha$ there is an ordinal $\theta < \kappa$ and a generic elementary embedding 
 \[j : (V_\lambda;\mathord{\in}, \dot{A} \cap V_\lambda) \to (V_\theta;\mathord{\in}, \dot{A} \cap V_\theta)\]
 with $\crit(j) = \alpha$. Because $\rank(\mathbb{P}) < \alpha$ we can extend $j$ to an elementary embedding 
 \[\jhat : (V_\lambda[G];\mathord{\in}, A\cap V_\lambda[G]) \to (V_\theta[G];\mathord{\in}, A \cap V_\theta[G])\]
 with $\crit(\jhat) = \alpha$ by defining $\jhat(\tau_G) = j(\tau)_G$ for every $\mathbb{P}$-name $\tau \in V_\lambda$.  Then $\jhat$ witnesses the weak virtual $A$-extendibility of $\alpha$ in $(V_{\kappa}[G], V_{\kappa+1}[G]; \mathord{\in})$ with respect to $\lambda$.
\end{proof}

Next we will prove two lemmas in ZF relating Suslin sets in $V$ to the generic Vop\v{e}nka property of $\aleph_1^V$ in inner models of ZFC. A useful fact when dealing with Suslin sets is that for every ordinal $\lambda$, a subset of $\omega^\omega \times \lambda^\omega$ is closed if and only if it has the form $[T]$ for some tree $T$ on $\omega \times \lambda$ where $[T]$ is the set of all infinite branches of $T$.  Therefore a set of reals is $\lambda$-Suslin if and only if it has the form $\p[T]$ for some tree on $\omega \times \lambda$ where $\p$ denotes the first coordinate projection.

\begin{lemma}[ZF] \label{lem:suslin-is-sigma-1-2}
 Assume that $\aleph_1^V$ is a generic Vop\v{e}nka cardinal in $L[x]$ where $x$ is a real and let $T \in L[x]$ be a tree on $\omega \times \lambda$ for some ordinal $\lambda$. Then $\p[T]^V$ is $\bfSigma^1_2$.
\end{lemma}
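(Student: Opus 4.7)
The plan is to express $y \in \p[T]^V$ as a $\bfSigma^1_2$ condition in $y$ with parameter $x$. Let $\kappa = \aleph_1^V$; the hypothesis ensures $\kappa$ is inaccessible (in fact generic Vop\v{e}nka) in $L[x]$.

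First I would use absoluteness of wellfoundedness for trees: since $T, y \in L[x, y]$ and the canonical rank function of a wellfounded tree persists between transitive ZFC-models, $y \in \p[T]^V$ if and only if $y \in \p[T]^{L[x, y]}$. Next, I would reduce to a countable-in-$V$ witness via a Skolem hull in $L[x, y]$, using the definable Skolem functions of the constructible model: form $N = \mathrm{Hull}^{L_\theta[x, y]}(\{y, T, x\})$ for sufficiently large $\theta$ and take its Mostowski collapse $\pi: N \to \bar N = L_{\bar\theta}[x, y]$, with $\bar\theta < \aleph_1^{L[x, y]} \leq \kappa$. Then $\bar N$ is countable in $V$ and coded by a real, the inverse $\pi^{-1}$ elementarily embeds $\bar N$ into $L_\theta[x, y]$ sending $\bar T := \pi(T)$ to $T$ and fixing $y$ and $x$, and by elementarity $\bar N \models y \in \p[\bar T]$ if and only if $y \in \p[T]^V$.

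The proposed $\bfSigma^1_2$ characterization is then: $y \in \p[T]^V$ if and only if there is a real coding a countable transitive $\bar N = L_{\bar\theta}[x, y]$ and an element $\bar T \in \bar N$ satisfying (a) $\bar N \models y \in \p[\bar T]$, and (b) the structure $(\bar N; \bar T, y)$ admits an elementary embedding into $(L[x, y]; T, y)$ sending $\bar T$ to $T$. Condition (a) is arithmetic in the real coding $\bar N$, $\bar T$, and $y$. The forward direction is provided by the Skolem hull above, while the reverse applies the elementarity of the embedding in (b) to transfer $y \in \p[\bar T]$ to $y \in \p[T]^{L[x, y]}$, and thence to $y \in \p[T]^V$.

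The hard part will be showing (b) is $\bfSigma^1_2$. By the game characterization of generic elementary embeddings cited in the paper from Bagaria-Gitman-Schindler, for countable $\bar N$ the existence of such an embedding into $L[x, y]$ is equivalent to Player II winning a certain back-and-forth game; by the wellfoundedness analysis of the associated strategy tree, this becomes $\bfSigma^1_2$-expressible in reals --- provided that truth in $L[x, y]$ for formulas with parameters from countable substructures reduces to truth in countable-in-$V$ elementary initial segments $L_{\theta'}[x, y] \prec L[x, y]$ with $\theta' < \kappa$. This reflection is where the generic Vop\v{e}nka hypothesis in $L[x]$ is crucial: since $\kappa$ is inaccessible there and $\aleph_1^{L[x, y]} \leq \kappa$ for every real $y \in V$, cofinally many stable $\theta' < \kappa$ in $L[x, y]$ provide the required countable-in-$V$ elementary approximants, coded by reals in $V$. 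Combining (a) and (b) then yields the $\bfSigma^1_2$ expression for $\p[T]^V$.
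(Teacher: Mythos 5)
Your overall shape matches the paper's: condense to a countable $\bar T$ with an elementary embedding back into a structure containing $T$, and then try to write the whole membership condition as a $\Sigma_1$ statement over $\mathrm{HC}$. But the step you yourself flag as ``the hard part'' --- showing that condition (b), the existence of an elementary embedding of the countable structure into the uncountable structure carrying $T$, is $\bfSigma^1_2$ --- is exactly where the argument must use the generic Vop\v{e}nka property, and your sketch of it does not work. Your proposed reflection to countable elementary initial segments $L_{\theta'}[x,y] \prec L[x,y]$ with $\theta' < \kappa$ cannot capture condition (b): when $\lambda \geq \aleph_1^V$ the tree $T$ does not belong to any such $L_{\theta'}[x,y]$, so embeddings into these segments say nothing about embeddings that must send $\bar T$ to $T$. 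Moreover your justification invokes only the inaccessibility of $\kappa$ in $L[x]$; that cannot suffice, since this lemma is what drives an equiconsistency with a hypothesis strictly stronger than an inaccessible cardinal.

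What the paper actually does at this point: working in $L[x]$, where the generic Vop\v{e}nka property is available, it forms the set $\mathscr{A}$ of all structures in $H_\kappa^{L[x]}$ that generically elementarily embed into $(L_\gamma[x]; \in, x, T)$. This set is generically hereditary, so by Proposition \ref{prop:hereditary} its complement among the relevant structures is the upward closure of a single small set $\mathscr{B}_0 \in H_\kappa^{L[x]}$ of obstructions; membership in $\mathscr{A}$ then reduces to the non-embeddability of each $\mathcal{M}_0 \in \mathscr{B}_0$ into the given countable structure, which is witnessed by a wellfounded strategy tree with a rank function into an ordinal below $\kappa$ and is therefore $\Sigma_1$ over $H_\kappa^{L[x]} = L_\kappa[x]$, hence $\Sigma_1$ over $\mathrm{HC}$ in $V$. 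This reduction of ``embeddable into one big structure'' to ``avoids each of $<\kappa$ many small obstructions'' is the idea your proposal is missing. A secondary but also fatal issue: you build your countable structures as levels of $L[x,y]$, whereas the hypothesis gives the generic Vop\v{e}nka property only in $L[x]$, so the hereditary-set machinery could not be applied to your structures even if you invoked it; the paper keeps everything inside $L[x]$ by taking the hull of $\ran(f)$ (the branch through $T$, which lives in $V$) inside the $L[x]$-structure $(L_\gamma[x];\in,x,T)$, rather than hulling $\{x,y,T\}$ inside $L[x,y]$.
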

\begin{proof}
 Let $\gamma$ be an ordinal large enough that $T \in L_\gamma[x]$, for example $\gamma = |\lambda|^{+L[x]}$.
 
 \begin{claim*}
  In $V$, for every real $z$ we have $z \in \p[T]$ if and only if there is an ordinal $\bar{\gamma} < \aleph_1$ and a tree $\bar{T} \in L_{\bar{\gamma}}[x]$ such that $z \in \p[\bar{T}]$ and there is an elementary embedding
  \[ \pi : (L_{\bar{\gamma}}[x] ; \mathord{\in}, x, \bar{T}) \to (L_\gamma[x] ; \mathord{\in}, x, T).\]
  (The language of these structures has a binary predicate symbol for set membership, a unary predicate symbol for $x \subset V_\omega$, and a constant symbol for $\bar{T}$ and $T$.)
 \end{claim*}

 To prove the claim, let $z$ be a real and assume $z \in \p[T]$.  Then $(z,f) \in [T]$ for some $f \in \lambda^\omega$. Let $X$ be the definable closure of $\ran(f)$ in the structure $(L_\gamma[x] ; \mathord{\in}, x, T)$. Then $X$ is the definable closure of $\ran(f) \cup \{T\}$ in the structure $(L_\gamma[x];\mathord{\in},x)$, which has definable Skolem functions, so $X$ is an elementary substructure of $(L_\gamma[x];\mathord{\in},x)$ and by G\"{o}del's condensation lemma for levels of $L[x]$ there is an ordinal $\bar{\gamma}$ and an elementary embedding 
 \[\pi : (L_{\bar{\gamma}}[x] ;\mathord{\in}, x) \to (L_\gamma[x];\mathord{\in},x)\]
 such that $\ran(\pi) = X$.  Because $\ran(f)$ is countable, $X$ is countable, so $\bar{\gamma} < \aleph_1$. Because $T \in X = \ran(\pi)$ we may define a tree $\bar{T} = \pi^{-1}(T)$. Then $\bar{T} \in L_{\bar{\gamma}}[x]$ and we may consider $\pi$ as an elementary embedding of the structure $(L_{\bar{\gamma}}[x] ; \mathord{\in}, x, \bar{T})$ into $(L_\gamma[x] ; \mathord{\in}, x, T)$. Because $\ran(f) \subset X = \ran(\pi)$ we may define an $\omega$-sequence of ordinals $\bar{f} = \pi^{-1} \circ f$. Then we have $(z,\bar{f}) \in [\bar{T}]$ and therefore $z \in \p[\bar{T}]$.
  
 Conversely, let $z$ be a real and assume that there is a countable ordinal $\bar{\gamma}$ and a tree $\bar{T} \in L_{\bar{\gamma}}[x]$ such that $z \in \p[\bar{T}]$ and there is an elementary embedding $\pi$ of $(L_{\bar{\gamma}}[x] ; \mathord{\in}, x, \bar{T})$ into $(L_\gamma[x] ; \mathord{\in}, x, T)$. Because $z \in \p[\bar{T}]$ we have $(z,\bar{f}) \in [\bar{T}]$ for some $\omega$-sequence $\bar{f}$ of ordinals. Then $(z,f) \in [T]$ where $f = \pi \circ \bar{f}$, so $z \in \p[T]$. This completes the proof of the claim.
 
 Note that in the claim, the domain and codomain of $\pi$ are in $L[x]$ because their universes are levels of $L[x]$ but $\pi$ itself is not required to be in  $L[x]$. However, by the absoluteness of elementary embeddability of countable structures the existence of an elementary embedding of $(L_{\bar{\gamma}}[x] ; \mathord{\in}, x, \bar{T})$ into $(L_\gamma[x] ; \mathord{\in}, x, T)$ is absolute between $V$ and $L[x][G]$ where $G \subset \Col(\omega,\bar{\gamma})$ is a $V$-generic filter. (Alternatively, we could take $G$ to be an $L[x]$-generic filter in $V$ because the forcing poset is well-ordered and its power set in $L[x]$ is countable in $V$.)
 
 Let $\kappa = \aleph_1^V$.  In $L[x]$, let $\mathscr{A}$ be the set of all structures in $H_\kappa$ that can be generically elementarily embedded into $(L_\gamma[x] ; \mathord{\in}, x, T)$ and note that because $\mathscr{A}$ is generically hereditary it is ${\bfSigma}_1^{H_\kappa}$ by Proposition \ref{prop:hereditary} and the generic Vop\v{e}nka property of $\kappa$. Because $H_\kappa^{L[x]}$ is equal to $L_\kappa[x]$, which is $\Sigma_1^{\text{HC}}(x)$ in $V$, it follows that $\mathscr{A}$ is ${\bfSigma}_1^{\text{HC}}$ in $V$.
 
 Now in $V$, by the claim and the note on absoluteness following it, for every real $z$ we have $z \in \p[T]$ if and only if there is an ordinal $\bar{\gamma} < \aleph_1$ and a tree $\bar{T} \in L_{\bar{\gamma}}[x]$ such that $z \in \p[\bar{T}]$ and $(L_{\bar{\gamma}}[x] ; \mathord{\in}, x, \bar{T}) \in \mathscr{A}$. Using this condition for membership it follows that $\p[T]$ is also ${\bfSigma}_1^{\text{HC}}$, and because $\p[T]$ is a set of reals this means it is~${\bfSigma}^1_2$.
\end{proof}

\begin{lemma}[ZF] \label{lem:omega-1-is-gen-vop-in-IM}
 Assume $2^{\aleph_1} \not\le |S_{\aleph_1}|$ where $S_{\aleph_1}$ is the pointclass of all $\aleph_1$-Suslin sets.  Then $\aleph_1^V$ is a generic Vop\v{e}nka-like cardinal in every inner model of ZFC.
\end{lemma}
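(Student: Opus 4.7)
The plan is to prove the contrapositive: assuming $\kappa := \aleph_1^V$ fails to be a generic Vop\v{e}nka-like cardinal in some inner model $W$ of ZFC, I will construct an injection $A \mapsto X_A$ from $\powerset(\kappa)$ into $S_{\aleph_1}$, contradicting the hypothesis $2^{\aleph_1} \not\le |S_{\aleph_1}|$.

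First I would extract the witnessing data in $W$: a first-order language $\mathcal{L}$ with $|\mathcal{L}|^W < \kappa$ and a $\kappa$-sequence $\vec{\mathcal{M}} = \langle \mathcal{M}_\alpha : \alpha < \kappa \rangle$ of distinct $\mathcal{L}$-structures in $H_\kappa^W$ such that no two of them admit a generic elementary embedding in $W$. Applying the coding trick from the proof of Proposition \ref{prop:hereditary} (pair each $\mathcal{M}_\alpha$ with the structure $(\alpha;\mathord{\in})$) I may further arrange that no generic elementary embedding exists in $W$ between any two distinct $\mathcal{M}_\alpha,\mathcal{M}_\beta$ in either direction. Since each $\mathcal{M}_\alpha \in H_\kappa^W$ has $W$-cardinality less than $\kappa = \aleph_1^V$, it is countable in $V$; without loss of generality I may take its underlying set to be an ordinal $\mu_\alpha < \aleph_1^V$.

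The key step is transferring non-embeddability from $W$ up to $V$: for distinct $\alpha \ne \beta$, there is no elementary embedding of $\mathcal{M}_\alpha$ into $\mathcal{M}_\beta$ in $V$. I would prove this via the back-and-forth game $G(\mathcal{M}_\alpha,\mathcal{M}_\beta)$ of Bagaria, Gitman, and Schindler \cite[Proposition 4.1]{BagGitSchGenericVopenka} and Gale--Stewart open determinacy (both applicable in $W$ and $V$). Nonexistence of a generic embedding in $W$ means player II does not win in $W$, so open determinacy produces a winning strategy $\sigma \in W$ for player I. Because the game rules and winning condition are absolute between $W$ and $V$ (the structures and first-order truth agree), $\sigma$ is still a winning strategy for I in $V$, so II does not win in $V$. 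Hence no generic embedding exists in $V$; and because $\mathcal{M}_\alpha,\mathcal{M}_\beta$ are countable in $V$, generic embeddability coincides with elementary embeddability in $V$, giving the claim.

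Finally I would build the Suslin sets and verify injectivity. For each $A \subseteq \kappa$ define a tree $T_A$ on $\omega \times \aleph_1$ whose nodes $(s,t)$ with $|s|=|t|$ require that $s$ is consistent with coding an $\mathcal{L}$-structure on $\omega$, and if $|t|\ge 1$ then $t(0) \in A$ and the map $k \mapsto t(k+1)$ for $0 \le k < |t|-1$ is consistent with being a partial elementary embedding of the $s$-coded structure into $\mathcal{M}_{t(0)}$ (in particular each $t(k+1) < \mu_{t(0)}$). Branches correspond to pairs $(x,f)$ where $x$ codes an $\mathcal{L}$-structure, $\alpha := f(0) \in A$, and $k \mapsto f(k+1)$ is an elementary embedding of the $x$-coded structure into $\mathcal{M}_\alpha$, so $X_A := \p[T_A]$ is $\aleph_1$-Suslin in $V$. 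For injectivity, given $\alpha \in A \setminus A'$, fix a bijection $\pi_\alpha : \omega \to \mu_\alpha$ in $V$ and let $r_\alpha$ be the real coding $\mathcal{M}_\alpha$ transported to $\omega$ along $\pi_\alpha$; the witness $f(0)=\alpha$, $f(k+1)=\pi_\alpha(k)$ shows $r_\alpha \in X_A$, while $r_\alpha \in X_{A'}$ would yield an elementary embedding of $\mathcal{M}_\alpha$ into some $\mathcal{M}_\beta$ with $\beta \in A'$ (hence $\beta \ne \alpha$) in $V$, contradicting the key step. The main obstacle is this absoluteness step; its resolution rests on the asymmetry that a winning strategy for player I is a \emph{set} in $W$ which automatically survives upward to $V$, whereas a winning strategy for II need not.
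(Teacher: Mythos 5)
Your proposal is correct in outline and reaches the same conclusion by a genuinely different decomposition, essentially the contrapositive of the paper's argument. The paper works ``positively'': for each $A \subseteq \aleph_1$ it forms the $\aleph_1$-Suslin set $A^*$ of codes of complete theories $\operatorname{Th}(\mathcal{M}_\alpha^f)$ of expansions by an enumerated $\omega$-sequence of constants ($\alpha \in A$), deduces from $2^{\aleph_1}\not\le|S_{\aleph_1}|$ that $A \mapsto A^*$ is \emph{not} injective, extracts from a coincidence of theories an elementary embedding in $V$ via definable Skolem hulls, and only then pushes that embedding down into a generic extension of $W$ by absoluteness of elementary embeddability of countable structures. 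You instead pull the counterexample \emph{up} from $W$ to $V$ --- via player I's winning strategy in the open game $G(\mathcal{M}_\alpha,\mathcal{M}_\beta)$ of \cite[Proposition 4.1]{BagGitSchGenericVopenka} --- and then exhibit an outright injection $A \mapsto \p[T_A]$ of $\powerset(\aleph_1)$ into $S_{\aleph_1}$. The two transfers are dual (downward absoluteness of embeddability of countable structures versus upward absoluteness of wellfoundedness), and the two Suslin codings are essentially the same object viewed from opposite sides; your route has a cleaner logical shape, while the paper's isolates a reusable claim about $V$ alone. (Incidentally, the coding trick pairing $\mathcal{M}_\alpha$ with $(\alpha;\mathord{\in})$ is not needed here: the failure of the generic Vop\v{e}nka-like property in $W$ already asserts non-embeddability for every \emph{ordered} pair of distinct structures.)

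Two points need tightening. First, your justification that the strategy remains winning in $V$ because ``the game rules and winning condition are absolute'' is too quick, since $V$ has more plays than $W$; the correct argument is that, player I's payoff being open, a strategy is winning if and only if the tree of consistent positions not yet lost by player II is wellfounded, and a rank function in $W$ witnesses this in $V$ --- exactly the device used in the proof of Proposition \ref{prop:hereditary}. Second, and more substantively, the tree $T_A$ is not closed under the natural reading of your description: if $s$ codes only the atomic diagram of a structure on $\omega$, then whether $k \mapsto t(k+1)$ is partial \emph{elementary} into $\mathcal{M}_{t(0)}$ involves quantified formulas and so depends on all of the real, not on the finite node $(s,t)$; weakening to ``preserves atomic formulas'' would only yield isomorphisms onto substructures, which does not contradict non-elementary-embeddability. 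You must have the real code the full elementary diagram (equivalently, Morleyize the language), after which each requirement at a node mentions only finitely many elements and $T_A$ is genuinely a tree on $\omega \times \aleph_1$. This is precisely the role of $\operatorname{Code}(\operatorname{Th}(\mathcal{M}_\alpha^f))$ in the paper's proof and of the remark there that every formula of the expanded language mentions only finitely many of the constants $c_n$.
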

\begin{proof}
 First we will show that $\aleph_1$ has a Vop\v{e}nka-like property in $V$:
 
 \begin{claim*}
  For every countable first-order language $\mathcal{L}$ and every sequence  of $\mathcal{L}$-structures $(\mathcal{M}_\alpha : \alpha < \aleph_1)$ such that the universe of each structure $\mathcal{M}_\alpha$ is a countable ordinal $\mu_\alpha$, there are distinct ordinals $\alpha, \beta < \aleph_1$ and an elementary embedding of $\mathcal{M}_\alpha$ into $\mathcal{M}_\beta$.
 \end{claim*}

 To prove the claim, we first note that each structure $\mathcal{M}_\alpha$ comes with a well-ordering because its universe is an ordinal.  By adding a binary predicate symbol for this well-ordering to the language, we may assume that every structure $\mathcal{M}_\alpha$ has definable Skolem functions. Now let 
 \[\mathcal{L}' = \mathcal{L} \cup \{c_n : n < \omega\}\]
 where each $c_n$ is a new constant symbol. For every ordinal $\alpha < \aleph_1$ and every function $f \in \mu_\alpha^\omega$ let $\mathcal{M}_\alpha^f$ be the expansion of $\mathcal{M}_\alpha$ to $\mathcal{L}'$ defined by \[c_n^{\mathcal{M}_\alpha^f} = f(n)\]
 for all $n<\omega$. Fixing an enumeration of $\mathcal{L}'$ in order type $\omega$, the theory $\operatorname{Th}(\mathcal{M}_\alpha^f)$ is coded by a real that we will call $\operatorname{Code}(\operatorname{Th}(\mathcal{M}_\alpha^f))$. For every set of ordinals $A \subset \aleph_1$ we can define a corresponding set of reals by
 \[A^* = \{ \operatorname{Code}(\operatorname{Th}(\mathcal{M}_\alpha^f)) : \alpha \in A \text{ and } f \in \mu_\alpha^\omega\}.\]
 Note that $A^*$ is $\aleph_1$-Suslin because it is the first coordinate projection of the set
 \[ \{(\operatorname{Code}(\operatorname{Th}(\mathcal{M}_\alpha^f)), \alpha ^\frown f) : \alpha \in S \text{ and } f \in \mu_\alpha^\omega\},\]
 which is closed in $\omega^\omega \times \aleph_1^\omega$ because every $\mathcal{L}'$-formula only contains finitely many of the new constant symbols $c_n$. Then because $2^{\aleph_1} \not\le |S_{\aleph_1}|$ there are two distinct sets $A_0,A_1 \subset \aleph_1$ such that $A_0^* = A_1^*$. We may assume without loss of generality that there is an ordinal $\alpha \in A_0 \setminus A_1$.
 
 Letting $f \in \mu_\alpha^\omega$ be a surjection of $\omega$ onto the countable ordinal $\mu_\alpha$, the real $\operatorname{Code}(\operatorname{Th}(\mathcal{M}_\alpha^f))$ is in $A_0^*$ by definition and is therefore also in $A_1^*$ because $A_0^* = A_1^*$. The membership of $\operatorname{Code}(\operatorname{Th}(\mathcal{M}_\alpha^f))$ in  $A_1^*$ must be witnessed by some ordinal $\beta \in A_1$ and some function $g \in \mu_\beta^\omega$ such that $\operatorname{Code}(\operatorname{Th}(\mathcal{M}_\alpha^f)) = \operatorname{Code}(\operatorname{Th}(\mathcal{M}_\beta^g))$ and therefore 
 \[\operatorname{Th}(\mathcal{M}_\alpha^f) = \operatorname{Th}(\mathcal{M}_\beta^g).\]
 Note that $\alpha \ne \beta$ because $\beta$ is in $A_1$ and $\alpha$ is not. 
 
 Let $\mathcal{F}$ and $\mathcal{G}$ be the definable closures of the sets $\ran(f)$ and $\ran(g)$ in the structures $\mathcal{M}_\alpha$ and $\mathcal{M}_\beta$ respectively. Because these structures have definable Skolem functions, $\mathcal{F}$ and $\mathcal{G}$ are elementary substructures of $\mathcal{M}_\alpha$ and $\mathcal{M}_\beta$ respectively, and because $\operatorname{Th}(\mathcal{M}_\alpha^f) = \operatorname{Th}(\mathcal{M}_\beta^g)$ there is an isomorphism $\pi : \mathcal{F} \to \mathcal{G}$.  (In fact, there is a unique isomorphism $\pi : \mathcal{F} \to \mathcal{G}$ such that $\pi \circ f = g$.) We have $\mathcal{F} = \mathcal{M}_\alpha$ because we chose $f$ to be a surjection onto $\mu_\alpha$, so $\pi$ is an elementary embedding of $\mathcal{M}_\alpha$ into $\mathcal{M}_\beta$, completing the proof of the claim.
 
 Continuing with the proof of the lemma we let $\kappa = \aleph_1^V$, let $W$ be an inner model of ZFC, and let $\mathscr{B} \in W$ be a set of structures for a common first-order language $\mathcal{L}$ such that $|\mathcal{L}|^W < \kappa$, $|\mathscr{B}|^W = \kappa$, and $|\mathcal{M}|^W < \kappa$ for every $\mathcal{M} \in \mathscr{B}$. We may enumerate $\mathscr{B}$ in $W$ as a $\kappa$-sequence of distinct structures  $(\mathcal{M}_\alpha : \alpha < \kappa)$. Using the axiom of choice in $W$ to choose a well-ordering of each structure, we may assume without loss of generality that the universe of each $\mathcal{M}_\alpha$ is an ordinal $\mu_\alpha < \kappa$. 
 
 Now by the claim there are distinct ordinals $\alpha, \beta < \kappa$ and an elementary embedding of $\mathcal{M}_\alpha$ into $\mathcal{M}_\beta$ in $V$. By the absoluteness of elementary embeddability of countable structures it follows that there is an elementary embedding of $\mathcal{M}_\alpha$ into $\mathcal{M}_\beta$ in $W[G]$ where $G \subset \Col(\omega, \mu_\alpha)$ is a $V$-generic filter, so the statement ``there is a generic elementary embedding of $\mathcal{M}_\alpha $ into $\mathcal{M}_\beta$'' holds in $W$.
\end{proof}

The main theorem will now follow easily from Lemmas \ref{lem:downward-absolute}, \ref{lem:small-forcing}, \ref{lem:suslin-is-sigma-1-2}, and \ref{lem:omega-1-is-gen-vop-in-IM}:

\begin{proof}[Proof of Theorem \ref{thm:main-equicon}]\ \\

 $\Con\eqref{item:gen-vopenka}$ implies $\Con\eqref{item:aleph-1-suslin-is-sigma-1-2}$: Assume ZFC and let $\kappa$ be a generic Vop\v{e}nka cardinal. Then $\kappa$ is a generic Vop\v{e}nka cardinal in $L$ by Lemma \ref{lem:downward-absolute}. Letting $G \subset \Col(\omega,\mathord{<}\kappa)$ be an $L$-generic filter we have $\aleph_1^{L[G]} = \kappa$, $\aleph_2^{L[G]} = \kappa^{+L}$, and $L[G] \models \text{CH}$ by the inaccessibility of $\kappa$ in $L$ and well-known properties of the Levy collapse. We will show that theory \ref{item:aleph-1-suslin-is-sigma-1-2} holds in $L(\mathbb{R})^{L[G]}$.

 First we will show that $L(\mathbb{R})^{L[G]}$ satisfies the statement $\Theta = \aleph_2$, meaning that there is no surjection from the reals onto $\aleph_2$. This statement holds in $L[G]$ because it follows from ZFC + CH. Note that the models $L[G]$ and $L(\mathbb{R})^{L[G]}$ have the same $\aleph_1$, namely $\kappa$, and the same $\aleph_2$, namely $\kappa^{+L}$. Because they also have the same reals, the statement $\Theta = \aleph_2$ is downward absolute from $L[G]$ to $L(\mathbb{R})^{L[G]}$.
 
 Now let $A$ be an $\aleph_1$-Suslin set of reals in $L(\mathbb{R})^{L[G]}$.\footnote{The following argument would apply more generally to any Suslin set $A$, but this would not yield a more general result because $\Theta = \aleph_2$ implies that every Suslin set is $\aleph_1$-Suslin.} We will show that $A$ is ${\bfSigma}^1_2$ in $L(\mathbb{R})^{L[G]}$, or equivalently in $L[G]$. Take a tree $T$ on $\omega \times \kappa$ in $L(\mathbb{R})^{L[G]}$ such that $A = \p[T]$. Because $T$ is in $L(\mathbb{R})^{L[G]}$ it follows that $T \in \text{HOD}_{z}^{L[G]}$ for some real $z \in L[G]$. Take a $\Col(\omega,\mathord{<}\kappa)$-name $\dot{z} \in L$ such that $z = \dot{z}_G$.
 
 For every ordinal $\alpha < \kappa$ we can define an $L$-generic filter on $\Col(\omega,\mathord{<}\alpha)$ by 
 \[G_\alpha = G \cap \Col(\omega,\mathord{<}\alpha).\]
 Take a successor ordinal $\alpha < \kappa$ sufficiently large that for every $n < \omega$ the value of $\dot{z}(n)$ is decided by a condition in $G_\alpha$.  (This is possible because $\kappa$ has uncountable cofinality in $L[G]$.)  Then we have $z \in L[G_\alpha]$ and a standard homogeneity argument shows that $\text{HOD}_{z}^{L[G]} \subset L[G_\alpha]$, so $T \in L[G_\alpha]$. Because $\alpha$ is a successor ordinal it is collapsed by forcing with $\Col(\omega,\mathord{<}\alpha)$, so the generic filter $G_\alpha$ is countable in $L[G_\alpha]$. Taking a real $x \in L[G_\alpha]$ coding $G_\alpha$, we have $L[x] = L[G_\alpha]$ and therefore $T \in L[x]$.
 
 Because $L[x]$ is a generic extension of $L$ by the poset $\Col(\omega,\mathord{<}\alpha) \in V_\kappa^L$, the fact that $\kappa$ is a generic Vop\v{e}nka cardinal in $L$ implies that $\kappa$ is a generic Vop\v{e}nka cardinal in $L[x]$ by Lemma \ref{lem:small-forcing}. Then because $T \in L[x]$ and the cardinal $\aleph_1^{L[G]} = \kappa$ is a generic Vop\v{e}nka cardinal in $L[x]$, the set of reals $A = \p[T]$ is ${\bfSigma}^1_2$ in $L[G]$ by Lemma \ref{lem:suslin-is-sigma-1-2}.
 
 $\eqref{item:aleph-1-suslin-is-sigma-1-2}$ implies $\eqref{item:no-injection-into-S-aleph-1}$: Assume toward a contradiction that ZF holds, $S_{\aleph_1} = {\bfSigma}^1_2$, $\Theta = \aleph_2$, and $2^{\aleph_1} \le S_{\aleph_1}$. Note that $\aleph_2 \le^* 2^{\aleph_1}$ and ${\bfSigma}^1_2 \le^* 2^{\aleph_0}$ provably in ZF, and  $2^{\aleph_1} \le S_{\aleph_1}$ implies $2^{\aleph_1} \le^* S_{\aleph_1}$, where $\le^*$ denotes the existence of a surjection, so we have 
 \[\aleph_2 \le^* 2^{\aleph_1} \le^* S_{\aleph_1} = {\bfSigma}^1_2 \le^* 2^{\aleph_0},\]
 contradicting $\Theta = \aleph_2$.
  
 $\Con\eqref{item:no-injection-into-S-aleph-1}$ implies $\Con\eqref{item:gen-vopenka}$: Assume ZF and $2^{\aleph_1} \not\leq |S_{\aleph_1}|$.  Then $\aleph_1^V$ is a generic Vop\v{e}nka-like cardinal in $L$ by Lemma \ref{lem:omega-1-is-gen-vop-in-IM}, and the least generic Vop\v{e}nka-like cardinal in $L$ is a generic Vop\v{e}nka cardinal in $L$ by Propositions \ref{prop:singular-equivalence} and \ref{prop:omega-erdos-is-gen-vop-lim-of-gen-vop}.  This completes the proof of Theorem~\ref{thm:main-equicon}.
\end{proof}


\end{document}